\newcommand{\R}{\mathbb R}
\newcommand{\mcM}{\mathcal M}
\newcommand{\mcU}{\mathcal U}
\newcommand{\mcP}{\mathcal P}
\newcommand{\mcS}{\mathcal S}
\newcommand{\colspan}{\operatorname{span}}
\newcommand{\ran}{\operatorname{ran}} 
\newcommand{\sym}{\operatorname{sym}}
\newcommand{\Skew}{\operatorname{skew}} 
\newcommand{\dist}{\operatorname{dist}}
\newcommand{\Exp}{\operatorname{Exp}}
\newcommand{\Log}{\operatorname{Log}}
\newcommand{\Gr}{\operatorname{Gr}}
\newcommand{\cond}{\operatorname{cond}}
\DeclareMathOperator{\T}{\textnormal{T}}
\newcommand{\St}{\operatorname{St}}
\DeclarePairedDelimiterX{\inp}[2]{\langle}{\rangle}{#1, #2}
\DeclarePairedDelimiterX{\inpF}[2]{\langle}{\rangle_{\textnormal{F}}}{#1, #2}
\newcommand{\deriv}[1][t]{\frac{\mathrm{d}}{\mathrm{d}#1}}
\newcommand{\diff}[1][t]{\mathrm{d}}
\newcommand{\Diff}[1][t]{\mathrm{D}}
\renewcommand*\env@matrix[1][*\c@MaxMatrixCols c]{%
	\hskip -\arraycolsep
	\let\@ifnextchar\new@ifnextchar
	\array{#1}}
\newcommand{\kibitz}[2]{\ifnum\Comments=1\textcolor{#1}{#2}\fi}
\newcommand{\normF}[1]{\norm{#1}_{\textnormal{F}}}
\DeclareMathOperator{\diag}{diag}
\theoremstyle{thmstyleone}
\newtheorem{lemma}{Lemma}
\theoremstyle{thmstyleone}%
\newtheorem{theorem}{Theorem}
\theoremstyle{thmstyletwo}%
\newtheorem{remark}{Remark}%
\theoremstyle{thmstylethree}%
\begin{document}

\title[Article Title]{Maximum volume coordinates for Grassmann interpolation: Lagrange, Hermite, and errors}


\author*[1]{\fnm{Rasmus} \sur{Jensen}}\email{rasmusj@imada.sdu.dk}

\author*[1]{\fnm{Ralf} \sur{Zimmermann}}\email{zimmermann@imada.sdu.dk}

\affil[1]{\orgdiv{Department of Mathematics and Computer Science}, \orgname{University of Southern Denmark}, \orgaddress{\street{Campusvej 55}, \city{Odense}, \postcode{5230}, \country{Denmark}}}


\abstract{    We present a novel approach to Riemannian interpolation on the Grassmann manifold. Instead of relying on the Riemannian normal coordinates, i.e. the Riemannian exponential and logarithm maps, we approach the interpolation problem with an alternative set of local coordinates and corresponding parameterizations. 
    A special property of these coordinates is that their calculation does not require any matrix decompositions.
    This is a numerical advantage over Riemann normal coordinates and many other retractions on the Grassmann manifold, especially when derivative data are to be treated.

    To estimate the interpolation error, we examine the conditioning of these mappings and state explicit bounds. It turns out that the parameterizations are well-conditioned, but the coordinate mappings are generally not.
    As a remedy, we introduce maximum-volume coordinates that are based on a search for subblocks of column-orthogonal matrices of large absolute determinant. We show that the order of magnitude of the asymptotic interpolation error on $\Gr(n,p)$  is the same as in the Euclidean space. 
    
    Two numerical experiments are conducted. The first is an academic one, where we interpolate a parametric orthogonal projector $QQ^T$, where the $Q$--factor stems from a parametric compact QR--decomposition. The second experiment is in the context of parametric model reduction of dynamical systems, where we interpolate reduced subspaces that are obtained by proper orthogonal decomposition.}

\keywords{Grassmann manifold, interpolation, local coordinates, Riemannian computing, Riemannian normal coordinates}


\pacs[MSC Classification]{15B10, 
  15B57, 
  65F99, 
  53C30, 
  53C80, 
  }

\maketitle

\section{Introduction}
Numerical operations on nonlinear Riemannian manifolds are commonly referred to as `Riemannian computing'.
Riemannian computing methods have established themselves as important tools in a large variety of applications, including  computer vision, machine learning, and optimization, see
\cite{AbsilMahonySepulchre2004,Absilbook:2008,EdelmanAriasSmith:1999,Gallivan_etal2003, Lui2012, sato2021, boumal2023}, and the anthologies \cite{Minh:2016:AAR:3029338, RiemannInComputerVision}.
They also have applications in statistics and data science
\cite{Fletcher:2020}
and in numerical methods for differential equations \cite{BennerGugercinWillcox2015,Celledoni_2020,hairer06gni, iserles_munthe-kaas_norsett_zanna_2000}. 

A special subclass of Riemannian computing problems is the interpolation of manifold-valued functions. 
Suppose that 
\[
    f: D \to \mcM, \quad x \mapsto f(x)
\]
is a differentiable function on a Euclidean domain $D\subset \R^d$ with outputs on a Riemannian manifold $(\mcM, \langle\cdot,\cdot\rangle)$.
Given a sample data set $\{(x_i, f(x_i)) | i=1,\ldots,k\}$,
manifold interpolation is concerned with computing an {\em interpolant}
\[
  \hat f: D \to \mcM, \quad x \mapsto \hat f(x) \quad \text{ with } \hat f(x_i) = f(x_i), \quad i=1,\ldots,k. 
\]
The basic interpolation task may be enhanced by including samples of derivative data of $f$, which then goes by the name of Hermite interpolation.
Classical Euclidean interpolation algorithms (Lagrange Newton, Hermite, radial basis functions, ...) rely heavily on the underlying vector space structure; a feature which manifolds are lacking. Therefore, the challenge in manifold interpolation is to ensure that the interpolant $\hat f$ respects the manifold structure.\\
The standard approach to manifold interpolation can be subdivided in three key steps:\footnote{This approach works, if the sampled data can be covered by a single coordinate chart. Otherwise, extra measures have to be taken to ensure a smooth transition of the interpolant between overlapping coordinate domains.}
\begin{itemize}[align=left]
    \item[Step 1:] {\bf Preprocessing.} Map the sample data from the manifold $\mcM$ to a (Euclidean) coordinate domain. This requires one to select a suitable coordinate chart.
    \item[Step 2:] {\bf Interpolation.}
    Employ a Euclidean interpolation method that is linear in the sample data to interpolate the data images in the coordinate domain.
    \item[Step 3:] {\bf Postprocessing.} Map the interpolated data from the Euclidean coordinate domain to the manifold.
    For consistency, this has to be done with the inverse of the coordinate chart selected in step 1.
\end{itemize}
In this work, we investigate interpolation methods on the Grassmann manifold $\Gr(n,p)$ of $p$-dimensional linear subspaces  of $\R^n$. 
Interpolation of subspaces can be used for {\em parametric model reduction},
where Grassmann data points appear as subspaces of dominant modes, usually associated with a complex dynamical system, \cite{AmsallemFarhat2008, Zimmermann2014, BennerGugercinWillcox2015, GentleGrassmannInt2024, HessQuainiRozza:2023, Zim21, ElOmari:2025, Pfaller:2020}.

This is also our main motivation to investigate Grassmann interpolation problems. The natural habitat of model reduction problems is high-dimensional spaces. Therefore, computational efficiency is of special concern and we work under the informal `low-rank' assumption that $n\gg p$, i.e., the dimension of the subspaces is much lower than that of the ambient space $\R^n$.

\paragraph{Original contribution}
Two issues are of special interest in the context of manifold interpolation. On the one hand, the numerical feasibility and efficiency, on the other hand estimates and control of the interpolation error. With the work at hand, we aim to contribute to both of them.

With regard to the first point, we examine local Grassmann coordinates that do not require the computation of matrix decompositions.
Although known for at least three decades, see \cite[Appendix C.4]{HelmkeMoore1994},
to the best of our knowledge, these coordinates have not yet been used for interpolation applications, nor have they been studied theoretically. 
We show that this set of coordinates gives rise to a retraction on the Grassmann manifold. Moreover, we quantify the conditioning of the coordinates and their inverses, which are the associated parameterizations. Further, we discuss
how to optimize the coordinates with respect to their condition number, and why this is important.
For reasons that will become clear later, we refer to the optimized coordinates as \textit{maximum--volume coordinates} (MV coordinates). 

The property of being `matrix decomposition--free' should be seen in contrast to the Riemannian normal coordinates. In a certain geometric sense, Riemannian normal coordinates (see \Cref{app:rie_norm_coords}) are the best possible option for interpolation problems. However, their numerical realizations on the Grassmann manifold rely on the singular value decomposition and matrix exponentials and/or matrix sine and cosine functions. The advantage of matrix decomposition--free coordinates is most prominent when Hermite interpolation is performed, because processing derivative data requires differentiating matrix decompositions when Riemannian normal coordinates are employed \cite{ZimmermannHermite:2020}.

With regard to the second point, we investigate sources of error in all of the aforementioned three steps `preprocessing', `interpolation', `postprocessing'. In particular, we examine how interpolation errors that stem from the Euclidean interpolation scheme are forwarded from the coordinate domain to the manifold.

Finally, we demonstrate the findings by means of numerical experiments, where Lagrange- and Hermite interpolation on the Grassmann manifold are considered. We observe that interpolation in MV coordinates leads to interpolants that exhibit an accuracy comparable to their Riemannian normal coordinate counterparts. 

\paragraph{State of the art and related work}

Univariate Hermite manifold interpolation has been considered explicitly in \cite{Jakubiak:2006,ZimmermannHermite:2020, SeguinKressner:2024}.
A related line of research is the generalization of Bézier curves and the De Casteljau-algorithm
to Riemannian manifolds, see \cite{BergmannGousenbourger:2018,GouseMassartAbsil:2018, Polthier:2013,  Noakes:2007, SAMIR:2019}.
Subdivision schemes based on geodesic averages have been investigated in \cite{Dyn:2017, Wallner:2005,BenZion:2022}.
A survey on manifold interpolation methods for applications in model reduction is given in \cite{Zim21}.
Recently, a very accessable tutorial focusing on Grassmann interpolation has appeared \cite{GentleGrassmannInt2024}.

We are aware of four results on manifold interpolation errors:
\begin{enumerate}
    \item The appendix of \cite{Attali2007} contains sandwich bounds `Euclidean distance vs. manifold distance' for embedded submanifolds where the Riemannian metric is the one obtained from the ambient Euclidean space.
    While not of concern in \cite{Attali2007}, these bounds can be used to estimate manifold interpolation errors. The recent preprint \cite{mataigne2024bounds} gives specific comparative bounds for the Stiefel manifold.
    \item The work \cite{ZimmermannHermite:2020} includes a result on error propagation `tangent space to manifold' that relies on Riemannian normal coordinates and the local sectional curvature.
    A consequence of this result is that if the tangent space interpolation scheme features an asymptotic error of $\mathcal{O}(h^k)$ for interpolation steps $h\to 0$, then so does the manifold interpolation scheme.
    \item By covariant calculus, it is shown in \cite{SeguinKressner:2024} that Hermite interpolation using retractions and a Riemannian version of the De Casteljau algorithm
    exhibits an asymptotic error of $\mathcal{O}(h^4)$, $h$ being the interpolation step size, just like with the Euclidean counterpart.
    \item Recently in \cite{Jacobsson:2024}, an error bound associated to approximating functions on Riemannian manifolds is established by applying Riemannian comparison theory.
    The error bound obtained is similar to the one from \cite{ZimmermannHermite:2020}, but relies on the Toponogov comparison theorem from Riemannian geometry and thus requires global curvature bounds rather than local ones. This result also shows that the order of the asymptotic interpolation error is preserved when transiting from the tangent space to the manifold. 
\end{enumerate}
To the best of our knowledge, the numerical errors associated with the preprocessing step of Riemannian interpolation, i.e, with the mapping of the sample data onto a suitable coordinate domain have not yet been considered in the literature.

\paragraph{Organisation of the paper}
In \Cref{sec:background_geo} we recall some background theory on the Grassmann manifold and a few facts from general Riemannian geometry. In \Cref{sec:MaxVolCoords} we discuss interpolation in local and maximum-volume coordinates, and consider data processing and interpolation errors. We apply the interpolation schemes in two numerical experiments in \Cref{sec:experiments} and summarize our findings in \Cref{sec:conclusions}.

\paragraph{Notation}
For the reader's convenience, we list the main symbols and variables.

\begin{small}
\begin{tabular}{@{}ll@{}}
Symbol &  meaning \\
\midrule
$I,I_n$   & identity matrix, provided with a dimensional index if required \\
$I_{n,p}$ & Rectangular `identity' $I_{n,p}= \begin{pmatrix}
                I_p\\
                0\end{pmatrix}\in \R^{n\times p}$\\
$\inpF{\cdot}{\cdot}, \normF{\cdot}$ & Euclidean metric $\inpF{X}{Y} = \tr(X^T Y)$ with Frobenius norm\\
$\langle \cdot,\cdot\rangle_0,\hspace{0.1cm} \|\cdot\|_0$ & canonical metric on Grassmann $\langle X, Y\rangle_0 = \frac12\tr(X^T Y)$ and norm \\
$O(n)$    & orthogonal group $O(n) = \{Q\in \R^{n\times n}\mid Q^TQ = I_n\}$\\
$\sym(n)$ & vector space of symmetric matrices $\{A\in \R^{n\times n}\mid A^T = A\}$.\\
$\Skew(n)$ & vector space of skew-symmetric matrices $\{A\in \R^{n\times n}\mid A^T = -A\}$.\\
$\mathcal{M}$ & a Riemannian manifold\\
$\St(n,p)$ & Stiefel manifold $\St(n,p)=\{U\in\R^{n\times p}\mid U^TU = I_p\}$\\
$\Gr(n,p)$ & Grassmann mnf. $\Gr(n,p)=\{P\in \sym(n): P^T=P, \rank(P)=p\}$\\
$\colspan(X)$ & span of a set of vectors or range of a matrix $X$ (context dependent)\\
$\cond_{(\cdot)}(f)$ & absolute condition number of $f$ wrt. the norm indicated by $(\cdot)$\\
$\exp_m$ & matrix exponential $\exp_m(X) = \sum_{k=0}^\infty \frac{1}{k!}X^k$\\
$\hat f$& the interpolant of a function $f$\\
$\qty{(t_i,f_i)}_{i=1}^d$& (Lagrange) data set of sample points plus sample values\\
$\qty{(t_i,f_i,f_i')}_{i=1}^d$ & (Hermite) data set of sample points, sample values and derivatives
\end{tabular}
\end{small}

\section{Background}\label{sec:background_geo}
This section covers the necessary geometrical background and numerical aspects of the Grassmann manifold.
Thanks to works such as \cite{EdelmanAriasSmith:1999, AbsilMahonySepulchre2004} and textbooks \cite{Absilbook:2008, boumal2023, sato2021, gallier2011geometric}, the Grassmann manifold is familiar to an increasing number of matrix analysts, so we try to keep our review short.
For a detailed survey see, \cite{Bendokat:2024}. 

\subsection{The Grassmann manifold}
\label{sec:Grassmann}

The Grassmann manifold (or Grassmannian) $\Gr(n,p)$ is the set of all $p$ dimensional subspaces of $\R^n$,
\[
    \Gr(n,p) = \{\mathcal{U}\leq \R^n\mid \dim\mathcal{U} = p\}.\footnote{When used for subsets of vector spaces, the symbol `$X\leq Y$' means '$X$ is a subspace of $Y$'.}
\]
A subspace $\mathcal{U}$ may be uniquely represented by
\begin{enumerate}
\item $\mathcal{U} = \colspan(U)$, where $U\in \R^{n\times p}$ is a matrix whose columns form a basis of $\mathcal{U}$.
Mind that $\colspan(U)$ is unique, but the spanning matrix $U$ is not.
\item The orthogonal projector $P_\mathcal{U}:\R^n \to \R^n$ onto $\mcU$.
This is the unique linear, idempotent map ($P_\mathcal{U}\circ P_\mathcal{U}=P_\mathcal{U}$) with $\ran(P_\mathcal{U})= \mathcal{U}$, $\ker(P_\mathcal{U})= \mathcal{U}^{\bot}$.
The orthogonal complement is with respect to the Euclidean inner product.
Orthogonal projectors are equivalently characterized by being self-adjoint, $P_{\mcU}^T = P_{\mcU}$, or by featuring unit norm $\|P_{\mcU}\|_2=1$, which is the smallest possible value a projector can exhibit, see \cite[Section 3.1]{Eigenvalue_Problems_Saad}.
\end{enumerate}
For the numerical representation of subspaces, we will use column-orthogonal matrices exclusively.
The set of all such matrices, i.e., the set of orthonormal $p$--frames is called the Stiefel manifold $$\St(n,p)=\{U\in \R^{n\times p}\mid U^TU=I_p\}.$$
Each subspace $\mcU\in \Gr(n,p)$ corresponds to an equivalence class of Stiefel matrices $[U]=\{UR\mid R\in O(p)\}\subset \St(n,p)$.
Any $U\in [U]$ is called a matrix representative of the subspace $[U]$.
When formalized, this construction shows that the Grassmann manifold is a quotient manifold of the Stiefel manifold
\[
 \Gr(n,p) = \St(n,p)/O(p),
\]
see 
\cite{EdelmanAriasSmith:1999, Absilbook:2008, Bendokat:2024, Zim21}, and \cite[Chapter 21]{LeeSmooth:2012}  for the full theoretical background.
Given any not necessarily column-orthonormal $U\in \R^{n\times p}$ with $\colspan(U)=\mcU$, 
the orhogonal projector onto $\mcU$ is
\[
    P_{\mcU} = U(U^TU)^{-1}U^T.
\]
For 
$U\in [U] \subset \St(n,p)$, the formula reduces to $P_{\mcU} = UU^T$.
This matrix representation is unique and independent from the chosen matrix representative $U\in [U]\subset \St(n,p)$.

Both the Stiefel and the Grassmann manifold are smooth manifolds; yet the former is embedded in the Euclidean matrix space $\R^{n\times p}$, while the latter is abstract when viewed as a quotient space of equivalence classes.
However, by considering the Grassmann manifold as the set of orthogonal projectors
\[
    \Gr(n,p) = \{P\in \R^{n\times n}\mid P^2 =P, P^T=P,  \rank(P) = p\}\subset \sym(n),
\]
it is an embedded submanifold of $\sym(n)\subset \R^{n\times n}$.

The archetype projector onto the first $p$ Cartesian coordinates is
\begin{equation}
\label{eq:P0}
    P_0 = 
    \begin{pmatrix}
        I_p & 0\\
        0   & 0
    \end{pmatrix}
    =
    \begin{pmatrix}
        I_p\\
        0
    \end{pmatrix}\begin{pmatrix}
        I_p& 0
    \end{pmatrix}
    =: I_{n,p} I_{n,p}^T.
\end{equation}
An eigenvalue decomposition shows that any other projector is obtained from an orthogonal similarity transformation on $P_0$, 
$$
P\in \Gr(n,p)
	\Leftrightarrow \exists Q\in O(n): P = QP_0Q^T.
$$ 
Note that if we split $Q=\begin{pmatrix} U & U_\bot\end{pmatrix}\in O(p)$ into the first $p$ columns and the tailing $(n-p)$ columns, then 
$P = QP_0Q^T = UU^T$.\\
The similarity transformation
is associated with an isometric group action 
\begin{equation}
\label{eq:projector_group_action}
		\Phi: O(n)\times \R^{n\times n} \to \R^{n\times n},
		(Q, M) \mapsto QMQ^T,
\end{equation}
and the Grassmann set of orthogonal projectors is the  {\em orbit} of $P_0$ under this action.

The dimensions of the manifolds $\Gr(n,p)$ and $\St(n,p)$ are $np-p^2$ and $np-\frac{1}{2}p(p+1)$ respectively. 

\paragraph{Tangent spaces and Riemannian metric}
The tangent space associated with a manifold location is the vector space of velocity vectors of differentiable curves passing through the chosen location. For the Grassmann manifold as the quotient space $\Gr(n,p) = \St(n,p)/O(p)$,
the matrix representation of the tangent space is 
\begin{equation*}
  \T_{\mcU}\Gr(n,p)= \left\{D \in \R^{n\times p}  ~\mid~ U^T D = 0\right\}\subset \R^{n\times p}.
\end{equation*}
From the projector perspective,
the tangent space at $P=QP_0Q^T$ is
\begin{equation}
\label{eq:GrTan}
    \T_P\Gr(n,p) = \left\{
    Q\begin{pmatrix}
        0&\Delta^T\\
        \Delta&0
    \end{pmatrix}Q^T~\mid~ \Delta\in \R^{(n-p)\times p}
    \right\} = Q \qty(\T_{P_0}\Gr(n,p)) Q^T.
\end{equation}
Grassmann tangent vectors can be characterized in many equivalent ways;
from \cite[Prop. 2.1]{Bendokat:2024}, we record
\begin{equation}
\label{eq:GrTan_characterization}
    \forall X\in\sym(n): \quad X\in \T_P\Gr(n,p) \Leftrightarrow XP+PX=X.
\end{equation}

This implies that $2PXP=PXP$ so that $PXP=0$ for any Grassmann tangent vector $X\in \T_P\Gr(n,p)$.
The projector tangent space is a subset set of the Euclidean vector space $\sym(n)$. Given a tangent vector $X\in \T_P\Gr(n,p)$, we can compute the so--called \textit{horizontal lift}, and obtain a tangent vector in Stiefel representation.
Any Stiefel tangent vector $T\in T_U\St(n,p)$ is of the form
$T = UA  + U_\bot B$ with $A\in \Skew(p)$. According to the quotient construction, Grassmann tangent vectors may be identified with special Stiefel tangent vectors, namely horizontal ones.
Horizontal Stiefel tangent vectors are precisely those, where $A=0$.
A choice must be made as to where to lift. 
If $U\in\St(n,p)$ is a chosen Stiefel representative of $P=UU^T$, then the horizontal lift corresponding to $U$ is $X^{\textsf{hor}}=XU$.  

The Riemannian metric, i.e., the family of inner products on the collection of tangent spaces is obtained by restricting the inner product on $\sym(n)$ to the tangent spaces.
The metric on $\sym(n)$ and thus on any $\T_P\Gr(n,p)$ is
\begin{equation}
\label{eq:canonical_metric}
    \langle X, Y\rangle_0 = \frac12 \trace(X^TY).
\end{equation}
This is called the canonical metric on $\Gr(n,p)$.
The factor of $\frac{1}{2}$ can be motivated by not wanting to count the independent variables $\Delta$ in \eqref{eq:GrTan} twice. However, it is also required to comply with common conventions: As a rule, all geometric quantities on the Grassmannian that appear in the literature, such as angles, lengths, and curvature, are with respect to this metric \cite{Wong1967, Wong1968}.
It coincides with the canonical metric on the Grassmannian when considered as a quotient space \cite{EdelmanAriasSmith:1999}.
\paragraph{Geodesics}
The length of a curve $c:[a,b] \to \Gr(n,p)$ is $L(c) = \int_a^b \|\dot c(t)\|_0\dd t$, where the norm is the one induced by \eqref{eq:canonical_metric}.
Geodesics are critical candidates for being shortest connections on a manifold, and are uniquely determined by a starting point and an initial velocity.
Given a projector $P = QP_0Q^T$ and a tangent vector $X= Q\begin{pmatrix}
        0&\Delta^T\\
        \Delta&0
    \end{pmatrix}Q^T\in \T_P\Gr(n,p)$, the Grassmann geodesic that starts from $P$ with velocity $X$ is
\begin{align}
\nonumber
    c(t)=& \exp_m (t [X,P]) P \exp_m (-t[X,P])\\
\nonumber
    =&Q\exp_m \qty(t\begin{pmatrix}
        0&-\Delta^T\\
        \Delta&0
    \end{pmatrix})
    \begin{pmatrix}
        I_p\\
        0
    \end{pmatrix}\cdot \begin{pmatrix}
        I_p& 0
    \end{pmatrix}
    \exp_m \qty(-t\begin{pmatrix}
        0&-\Delta^T\\
        \Delta&0
    \end{pmatrix}) Q^T\\
    =:& \gamma(t)\cdot \gamma(t)^T.
    \label{eq:Grassmann_proj_geo}
\end{align}
This follows from \cite[Proposition 3.2]{Bendokat:2024}, because the commutator bracket yields\\ $[X,P] = XP-PX = Q\begin{pmatrix}
        0&-\Delta^T\\
        \Delta&0
    \end{pmatrix}Q^T$.
Observe that the Grassmann projector curve $c:t\to c(t)\in\Gr(n,p)$ splits into the `projector product' of the Stiefel curve $\gamma:t\to \gamma(t)\in \St(n,p)$.
This is the horizontal Stiefel geodesic starting from $\gamma(0) = QI_{n,p} = \begin{pmatrix}
    U & U_\bot
\end{pmatrix}I_{n,p} = U$ with horizontal velocity
$\dot\gamma(0) = U_\bot \Delta\in \T_U\St(n,p)$.
With the SVD $S\Sigma R^T = \Delta$, it 
holds 
$\begin{pmatrix}
    0&-\Delta^T\\
    \Delta&0
\end{pmatrix}=\begin{pmatrix}
        R& 0\\
        0& S
    \end{pmatrix}
    \begin{pmatrix}
        0&-\Sigma\\
        \Sigma&0
    \end{pmatrix}
    \begin{pmatrix}
        R^T& 0\\
        0& S^T
    \end{pmatrix}$,
and the Stiefel curve $\gamma$ can be written in the familiar form
    \[
        \gamma(t) = UR\cos(t\Sigma)R^T + U_\bot S \sin(t\Sigma) R^T
    \]
known from \cite[Equation 2.65]{EdelmanAriasSmith:1999}, with the sine and cosine functions applied element--wise on then diagonal of $\Sigma$.

A geodesic $t\to c(t)$ with $\|\dot c(t)\|_0\equiv 1$ is called {\em an arc-length geodesic} or simply a {\em unit--speed geodesic}.
For unit--speed geodesics, distances covered by $t$ in the parameter domain are exactly as long as the corresponding part of the trajectory on the manifold.
A Grassmann geodesic in the form of \eqref{eq:Grassmann_proj_geo} is unit--speed in the canonical metric \eqref{eq:canonical_metric} if and only if $\normF{\Delta} = 1$. 

For submanifolds $\mcM$ that are embedded in a Euclidean space, the Riemannian distance $\dist(p,q)$, $p,q\in\mcM$ can be related to the Euclidean distance by elementary geometric means.
The following statement can be inferred from the appendix of \cite{Attali2007}.
\begin{lemma}[{\cite[Property I]{Attali2007}}]
\label{lem:Eulcid2EmbeddedMan_distance}
    Consider $\R^d$ with inner product  $\langle \cdot, \cdot\rangle$ and associated metric $\|\cdot\|=\sqrt{\langle \cdot, \cdot\rangle}$.
    Let $\mcM \subseteq \R^d$ be an embedded submanifold with the Riemannian metric induced by 
    $\langle \cdot, \cdot\rangle$.\\
    Let $p,q\in \mcM$ be such that there is a unit--speed geodesic $c:[0,l] \to \mcM$ with $c(0)=p$ and $c(l)=q$ and Riemannian distance $l=\dist(p,q)$.
    Let $\hat \kappa  = \max_{t\in [0,l]}  \{\|\ddot c(t)\|\}$. \\
    If $\dist(p,q))<\frac{\pi}{\hat \kappa}$ then
    \begin{equation*}
       \dist(p,q) \geq \|p-q\| \geq \frac{2}{\hat \kappa} \sin(\frac{\hat \kappa}2 \dist(p,q)).
    \end{equation*}
\end{lemma}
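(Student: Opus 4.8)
The plan is to reduce the statement to a one-variable analysis of the chord length along the unit-speed geodesic $c:[0,l]\to\mcM$ and to apply Taylor's theorem with an integral (or Lagrange) remainder. First I would set $l=\dist(p,q)$ and consider the vector-valued function $g(t) = c(t)-c(0)\in\R^d$, so that $\|p-q\| = \|g(l)\|$. Since $\mcM$ carries the induced metric, the Riemannian arc length of $c$ over $[0,l]$ equals its Euclidean arc length, hence $\|p-q\| = \|g(l)\| \le \int_0^l \|\dot c(t)\|\dInt t = l = \dist(p,q)$; this gives the easy left-hand inequality $\dist(p,q)\ge\|p-q\|$ with no curvature assumption. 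The real content is the lower bound $\|p-q\|\ge \frac{2}{\hat\kappa}\sin(\frac{\hat\kappa}{2}\dist(p,q))$.

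For the lower bound I would parametrize the straight Euclidean chord and compare. The key geometric input is the curvature bound $\|\ddot c(t)\|\le\hat\kappa$ on $[0,l]$, together with $\|\dot c(t)\|\equiv 1$. One clean route: write $\|p-q\|^2 = \langle g(l),g(l)\rangle$ and differentiate twice in $l$, or more directly, expand $c(t)$ around the midpoint $t=l/2$. Using Taylor's theorem, $c(l) = c(l/2) + \tfrac{l}{2}\dot c(l/2) + R_+$ and $c(0) = c(l/2) - \tfrac{l}{2}\dot c(l/2) + R_-$, where each remainder $R_\pm$ has norm at most $\tfrac12(\tfrac l2)^2\hat\kappa$. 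Subtracting, $p-q = c(l)-c(0) = l\,\dot c(l/2) + (R_+ - R_-)$, so by the reverse triangle inequality $\|p-q\| \ge l - 2\cdot\tfrac12(\tfrac l2)^2\hat\kappa = l - \tfrac{\hat\kappa l^2}{4}$. One then checks that $l - \tfrac{\hat\kappa l^2}{4}\ge \tfrac{2}{\hat\kappa}\sin(\tfrac{\hat\kappa}{2}l)$ fails to be tight enough in general, so a sharper estimate is needed: instead of bounding the remainder crudely, I would track the component of $\dot c$ along a fixed direction. Concretely, fix the unit vector $u$ in the direction of $\dot c(l/2)$ and bound the angle that $\dot c(t)$ makes with $u$: since $\|\ddot c\|\le\hat\kappa$ and $\|\dot c\|\equiv1$, the angular velocity of $\dot c$ is at most $\hat\kappa$, so the angle between $\dot c(t)$ and $\dot c(l/2)$ is at most $\hat\kappa|t-l/2|$. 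Hence $\langle \dot c(t), u\rangle \ge \cos(\hat\kappa|t-l/2|)$, and integrating, $\langle p-q, u\rangle = \int_0^l \langle \dot c(t), u\rangle\dInt t \ge \int_0^l \cos(\hat\kappa|t-l/2|)\dInt t = \tfrac{2}{\hat\kappa}\sin(\tfrac{\hat\kappa}{2}l)$. Since $\|p-q\|\ge\langle p-q,u\rangle$, this is exactly the claimed bound, and the hypothesis $\dist(p,q)<\pi/\hat\kappa$ guarantees $\hat\kappa|t-l/2|<\pi/2$ so that the cosine stays nonnegative and the angular argument is valid.

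To make the angular estimate rigorous I would argue as follows: let $\theta(t)$ denote the angle between $\dot c(t)$ and $\dot c(l/2)$, i.e. $\cos\theta(t) = \langle\dot c(t),\dot c(l/2)\rangle$ with $\theta(l/2)=0$. Because $\dot c$ is a unit-speed curve on the Euclidean unit sphere $S^{d-1}\subset\R^d$ with speed $\|\ddot c(t)\|\le\hat\kappa$, the spherical (hence the chordal, hence the angular) distance from $\dot c(l/2)$ to $\dot c(t)$ is at most $\hat\kappa|t-l/2|$; this is just the statement that a curve on the sphere cannot outrun its speed bound. Combined with $\hat\kappa|t-l/2|\le\hat\kappa l/2 < \pi/2$, we get $\theta(t)\le\hat\kappa|t-l/2|<\pi/2$ and therefore $\langle\dot c(t),u\rangle = \cos\theta(t)\ge\cos(\hat\kappa|t-l/2|)\ge0$, with $u:=\dot c(l/2)$. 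The rest is the elementary integral computation above.

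The main obstacle is getting the \emph{sharp} constant rather than a weaker polynomial bound: a naive second-order Taylor estimate of the chord gives $\|p-q\|\ge l-\tfrac{\hat\kappa l^2}{4}$, which is asymptotically correct to second order but does not match $\tfrac{2}{\hat\kappa}\sin(\tfrac{\hat\kappa}{2}l)$ for larger $l$. The device that closes this gap is switching from bounding $\|p-q\|$ directly to bounding its projection onto the fixed midpoint-velocity direction $u$ and exploiting that $\ddot c$ controls the \emph{turning rate} of $\dot c$ on the unit sphere — this is where the hypothesis $\dist(p,q)<\pi/\hat\kappa$ is used in an essential way, keeping all the angles below $\pi/2$ so the cosine factors are nonnegative and monotone. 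A minor technical point to handle carefully is the regularity of $c$: geodesics are smooth, so $\ddot c$ is continuous and $\hat\kappa=\max_{[0,l]}\|\ddot c(t)\|$ is attained and finite, and all the integrals and Taylor expansions above are legitimate.
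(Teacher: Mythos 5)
Your argument is correct. Note that the paper does not prove this lemma at all: it is quoted verbatim as Property~I from the appendix of the cited reference \cite{Attali2007}, so there is no in-paper proof to compare against. Your reasoning is essentially the standard proof of this chord--arc estimate: the first inequality is the trivial ``chord $\le$ arc length'' bound (valid because the induced metric makes Riemannian and Euclidean speed coincide), and the second follows by projecting $q-p=\int_0^l \dot c(t)\,\mathrm{d}t$ onto the fixed unit vector $u=\dot c(l/2)$ and observing that $t\mapsto \dot c(t)$ is a curve on the unit sphere of speed at most $\hat\kappa$, so the angle between $\dot c(t)$ and $u$ is at most $\hat\kappa\,|t-l/2|<\pi/2$; integrating $\cos(\hat\kappa|t-l/2|)$ gives exactly $\tfrac{2}{\hat\kappa}\sin(\tfrac{\hat\kappa}{2}l)$. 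Two cosmetic points: you write $\langle p-q,u\rangle=\int_0^l\langle\dot c(t),u\rangle\,\mathrm{d}t$, which has the wrong sign (it should be $q-p$ on the left), though this is harmless since you only use $\|p-q\|\ge\langle q-p,u\rangle$; and the degenerate case $\hat\kappa=0$ (a straight-line geodesic) should be read as the limit $\tfrac{2}{\hat\kappa}\sin(\tfrac{\hat\kappa}{2}l)\to l$, where the statement holds with equality. The exploratory midpoint-Taylor detour in your second paragraph is correctly discarded; only the angular argument is needed.
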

For an embedded submanifold $\mcM\subset \R^d$ under the Euclidean metric, a manifold curve $c:t\to c(t)\in\mcM$ can also be considered as a classical Euclidean space curve in $\R^d$.
If $c$ is unit--speed, then the quantity $\kappa(t) =\|\ddot c(t)\|$ is the Euclidean curvature of the space curve $c$ at time $t$.
In the language of Frenet curves, $\kappa(t)$ is  the first Frenet curvature \cite[Theorem 2.13]{kuhnel:2015}.
In \cite{Attali2007}, the term `sectional curvature' is used to denote $\kappa$, but this is not consistent with the general concept of sectional curvatures on Riemannian manifolds, see, e.g., \cite[Definition 6.4]{kuhnel:2015}.

\section{Maximum volume coordinates for interpolation on the Grassmann manifold}
\label{sec:MaxVolCoords}

The classical approach to interpolation on Riemannian manifolds works with Riemannian normal coordinates \cite{Amsallem2010, Zim21}. In this section, we present interpolation in an alternative set of local coordinates, where we leverage that the Grassmann manifold of orthogonal projectors features an atlas of coordinate charts and associated parameterizations that are given in closed form and that do not rely on matrix factorizations \cite[Appendix C.4]{HelmkeMoore1994}, \cite[Section 6]{Bendokat:2024}. 

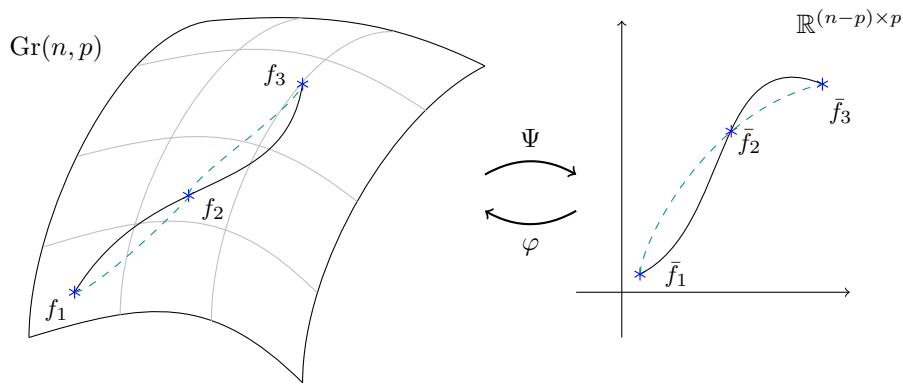
\begin{figure}[!h]
    \centering
    \begin{tikzpicture}[scale=1.2]

    \begin{scope}
    	
    	\node[] at (0.3, 3.2) { $\textnormal{Gr}(n,p)$};

        \draw (0,0) ..controls (0,1.3) and (1,3.4).. (2,3.5);
        \draw (0,0) ..controls (1.3,0.4) and (2,0.5).. (3,-0.5);
        \draw (3,-0.5) ..controls (3,0.8) and (4,2.5).. (5,3);
        \draw (2,3.5) ..controls (1.3+1.9,0.4+3.2) and (2+2,0.5+3).. (5,3);

        \draw[lightgray] (0.155,1) ..controls (1.455,1.4) and (2.155,1.5).. (3.155,0.5);
        \draw[lightgray] (0.53,2) ..controls (1.455+0.385,2.35) and (2.155+0.385,2.35).. (3.155+0.445,1.5);
        \draw[lightgray] (1.18,3) ..controls (1.455+0.65+0.385,3.3) and (2.155+0.65+0.385,3.3).. (3.155+0.65+0.555,2.5);

        \draw[lightgray] (1,0.25) ..controls (1,1.3+0.22) and (2,3.4+0.015).. (3,3.53);
        \draw[lightgray] (2,0.18) ..controls (2,1.3+0.22) and (3,3.4+0.015).. (4,3.38);

        \draw[] (0.5,0.5) ..controls (1.35,1.9) and (2.8,1.5).. (3,2.8);

        \draw[mark options={color=blue}]
        plot[mark=asterisk, only marks] coordinates {(0.5,0.5) (1.75,1.57) (3,2.8)}; 
        \node[] at (0.3,0.3) {\small $f_1$};
        \node[] at (2.02,1.42) {\small $f_2$};
        \node[] at (2.7,2.9) {\small $f_3$};
        
        \draw[thick,->] (5,1.8) to [bend left] (6,1.8);
        \node[] at (5.5,2.20) {$\Psi$};
        \draw[<-,thick] (5,1.4) to [bend right] (6,1.4);
        \node[] at (5.5,1) {$\varphi$};

        \draw[->] (6,0.5) -- (9,0.5);
        \draw[->] (6.5,-0) -- (6.5,3.5);
        \node[] at (9, 3.5) { $\mathbb{R}^{(n-p)\times p}$};

        \draw[] (6.7,0.7) ..controls(7.7,1.2) and (7.5,3.3).. (8.7,2.8);

        \draw[mark options={color=blue}]
        plot[mark=asterisk, only marks] coordinates {(6.7,0.7) (7.7,2.28) (8.7,2.8)}; 
        \node[] at (7.1,0.7) {\small $\bar f_1$};
        \node[] at (7.9,2.16) {\small $\bar f_2$};
        \node[] at (8.9,2.5) {\small $\bar f_3$};

        \draw[dashed,ForestGreen] (0.5,0.5) ..controls(0.5+0.25,0.5) and (1.75+0.25,1.57+0.2).. (1.75,1.57);
        \draw[dashed,ForestGreen] (1.75,1.57) ..controls(1.75,1.57+0.2) and (2.9,2.5).. (3,2.8);

         \draw[dashed,ForestGreen] (6.7,0.7) ..controls(6.7,1.) and (7.4,2.5).. (8.7,2.8);

        
    \end{scope}

\end{tikzpicture}
    \caption{Concept drawing of interpolation in local coordinates with chart $\Psi$ and parameterization $\varphi=\Psi^{-1}$.
    `Preprocessing': 
    The data points $f_1,f_2,f_3\in \Gr(n,p)$ (blue stars, left)
    of the true function on $\Gr(n,p)$ (solid black, left)
    are mapped to their coordinate images $\bar f_1,\bar f_2,\bar f_3\in \R^{(n-p)\times p}$ via $\Psi$ (blue stars, right).
    `Interpolation':
    The coordinate interpolant is computed in the coordinate domain in $\R^{(n-p)\times p}$ (dashed green, right).
    `Postprocessing': The interpolant is mapped back to the manifold (dashed green, left) via $\varphi$.}
    \label{fig:interpol_local_coorts}
\end{figure}
Given interpolation data from a differentiable map $f:I\to \Gr(n,p)$, interpolation in local coordinates consists of three steps outlined in \Cref{alg:interpol_loc}. Note that the algorithm is generic, $\Gr(n,p)$ could be replaced with any Riemannian manifold. 

In the next sections, we will describe the tools for interpolation in the matrix decomposition--free local coordinates for $\Gr(n,p)$ mentioned in the introduction. Firstly, we describe the local coordinate charts and local parameterizations and discuss the properties and aspects of computing these maps. We prove that this collection of parametrizations of $\Gr(n,p)$ is a retraction \cite[Chapter 4]{Absilbook:2008}; this type of map is frequently encountered in Riemannian optimization. Next, we discuss two interpolation schemes under these coordinates, namely Lagrange and Hermite interpolation. For comparison purposes, their normal coordinate counterparts are stated in \Cref{sec:Int_in_normal_coords}. 

Secondly, we present an error analysis, in which we estimate the error bounds for each of the numerical operations associated with the three steps in \Cref{alg:interpol_loc}. The {\em preprocessing error} stems from the conditioning of the coordinate chart $\Psi$. It turns out that the chart $\Psi$ under investigation may be arbitrarily ill-conditioned. This issue paves the way for introducing maximum--volume coordinates in \Cref{sec:ill_to_well}. The interpolation step comes with an {\em interpolation error} governed by the user-chosen Euclidean interpolation scheme, and the {\em postprocessing error} depends on the conditioning of the parameterization $\varphi$ and the curvature of $\Gr(n,p)$. 

\begin{algorithm}
    \begin{algorithmic}[1]
        \Require Data set $\mathcal{S}$ sampled from $f:I\to \Gr(n,p)$, $t^*\in I$
        \State \textbf{Preprocessing:} Map the data set $\mathcal{S}$ to a chosen local coordinate domain
        \State \textbf{Interpolate:} Apply a Euclidean interpolation scheme to obtain an interpolant in local coordinates.
        \State \textbf{Postprocessing:} Map the interpolant to the manifold to obtain the manifold interpolant $\hat f(t^*)$ of $f$.
    \end{algorithmic}
    \caption{Manifold interpolation in local coordinates, cf. \Cref{fig:interpol_local_coorts}.}
    \label{alg:interpol_loc}
\end{algorithm}

\subsection{Basic construction and properties}
\label{sec:NonMatCompCoords}

In \cite[Appendix C.4]{HelmkeMoore1994}, \cite[Section 6]{Bendokat:2024}, a covering set of local parameterizations of the Grassmannian is discussed that do not rely on matrix decompositions. Briefly, if $U\in \St(n,p)$ is a basis of a subspace $\mcU$, then $P=UU^T$ is the unique orthogonal projection onto $\mcU$. With $U=\begin{bmatrix}
    U_1\\
    U_2
\end{bmatrix}, U_1\in \R^{p\times p}, U_2\in \R^{(n-p)\times p}$, we obtain 
\begin{equation*}
    P=\begin{pmatrix}
        U_1U_1^T&U_1U_2^T\\
        U_2U_1^T&U_2U_2^T
    \end{pmatrix}:=\begin{pmatrix}
        A     &\hat B^T\\
        \hat B&C
    \end{pmatrix}\in\sym(n).
\end{equation*}
If $A\in \sym(p)$ is invertible, we can send $P$ bijectively to local coordinates in $\R^{(n-p)\times p}$.
More precisely, the following map is a
coordinate chart 
\begin{equation}
\label{eq:loc_coord_psi}
    \Psi:\Gr(n,p)\supset \Omega \to  \R^{(n-p)\times p}, \, \Psi\qty(\begin{pmatrix}
    A&\hat B^T\\
    \hat B&C
\end{pmatrix})= \hat BA^{-1},
\end{equation}
where $\Omega\subset \Gr(n,p)$ is a (relative) open set. The associated parameterization $\varphi:=\Psi^{-1}$ sends a given coordinate matrix $B\in \R^{(n-p)\times p}$ to a projector $P=\varphi(B)\in \Gr(n,p)$,
\begin{equation}
\label{eq:loc_coord_varphi}
    \varphi:\R^{(n-p)\times p}\to \Gr(n,p):\varphi(B)=\begin{pmatrix}
        I_p\\
        B
    \end{pmatrix}\left(I_p+B^T B\right)^{-1}\begin{pmatrix}
        I_p& B^T
    \end{pmatrix}.
\end{equation}
The matrix factor in the middle $I_p + B^T B$ is guaranteed to be invertible, because $B^TB$ is positive semi-definite for any $B\in \R^{(n-p)\times p}$.
If $(B^TB)=V\Lambda V^T$ is the EVD, then
$I_p+B^TB= V(I+\Lambda)V^T$. Hence, the eigenvalues are $1+\lambda_i$, $\lambda_i\geq 0$. Therefore, $I_p + B^TB$ is strictly positive definite and $\varphi$ is globally defined. The condition number is
$\cond_2(I_p+B^TB) = \|I_p+B^TB\|_2\|(I_p+B^TB)^{-1}\|_2= \frac{1+\lambda_{\max}}{1+\lambda_{\min}}\leq  1+\lambda_{\max}$.

For an embedded submanifold described by local parameterizations, the tangent space is the image of the differential of the parameterization.
More precisely, if $\varphi$ is a parameterization with $\varphi(B) = P$, then $\T_P\Gr(n,p) = \colspan(D\varphi_B)$, cf.
\cite[Def. 3.1.1]{Klingenberg:1978}. 
For the special point $B=0\in\R^{(n-p) \times p}$ that corresponds to the canonical projector onto the first $p$ coordinate axes,
$
    \varphi(0)= P_0
    = 
    \begin{pmatrix}
        I_p & 0\\
        0   & 0
    \end{pmatrix},
$
we obtain

\begin{align}
\nonumber
    d\varphi_{0}(\Delta)&=\begin{pmatrix}
        0\\
        \Delta
    \end{pmatrix}\begin{pmatrix}
        I_p&0
    \end{pmatrix}+\begin{pmatrix}
        I_p\\
        0
    \end{pmatrix}\eval{\deriv}_{t=0}(I_p+t^2\Delta^T\Delta)^{-1}\begin{pmatrix}
        I_p&0
    \end{pmatrix}+\begin{pmatrix}
        I_p\\
        0
    \end{pmatrix}\begin{pmatrix}
        0&\Delta^T
    \end{pmatrix}\\
    \label{eq:Gr_tanspace_P0}
    &=\begin{pmatrix}
        0&\Delta^T\\
        \Delta&0
    \end{pmatrix}\quad \implies
    \T_{P_0}\Gr(n,p) = \left\{
    \begin{pmatrix}
        0&\Delta^T\\
        \Delta&0
    \end{pmatrix}~\mid~ \Delta\in \R^{(n-p)\times p}
    \right\}.
\end{align}
With the group action $\Phi$ from \eqref{eq:projector_group_action}, we can translate the local parameterization $\varphi$ that parameterizes a manifold domain around $P_0$ to obtain local parameterizations around any other point on $\Gr(n,p)$:
Let $P\in \Gr(n,p)$. Then there is $Q=Q(P)\in O(n)$ such that $P=QP_0Q^T$.
The mapping
$$
    \varphi^Q:\R^{(n-p)\times p}\to \text{im}(\varphi^Q)\subset\Gr(n,p),~ B\mapsto Q\varphi(B)Q^T
$$
is a parameterization with $P\in \text{im}(\varphi^Q)$ and coordinate chart
\[		\Psi^Q\big\vert_{\text{im}(\varphi^Q)}:\text{im}(\varphi^Q)\to \R^{(n-p)\times p}, \tilde P\mapsto \Psi(Q^T \tilde PQ).
\]

It is clear that the differential $d\varphi^Q (\Delta)=Qd\varphi_0(\Delta)Q^T$ is a bijection when viewed as a map $d\varphi^Q:\R^{(n-p) \times p}\to \T_P\Gr(n,p)$. 

We recall from \cite[Chapter 4]{Absilbook:2008}, \cite[Section 3.6]{boumal2023} that a retraction $R$ on a Riemannian manifold $\mcM$ is a smooth mapping from the tangent bundle $\T\mcM$ to $\mcM$ such that
\begin{itemize}
\item for all $p\in\mcM$, $R_p: \T_p\mcM \to \mcM$
\item $R_p(0)=p$, where $0$ is the zero of $\T_p\mcM$.
\item The differential of $R_p$ at $0$ is the identity, $d(R_p)_0 = \textnormal{id}_{\T_p\mcM}$.
\end{itemize}

In both the books \cite{Absilbook:2008}, \cite{boumal2023},
it is only for notational convenience that the full tangent bundle $\T\mcM$ is stated as the domain of definition for the retraction. 
The concept works the same, if all $R_p$ are defined on open subsets around $0$ in the associated tangent spaces $\T_p\mcM$, $p\in \mcM$. In fact, without allowing for this relaxation, not even the Riemannian exponential would count as retraction, while the original motivation to consider retractions is to have `cheap' approximations of the Riemannian exponential.
\begin{theorem}
\label{lem:loc_paras_retraction}
The collection of parameterizations $\{\varphi^Q\mid Q\in O(n)\}$ above gives rise to a retraction on $\Gr(n,p)$.
\end{theorem}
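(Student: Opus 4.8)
The plan is to verify the three defining properties of a retraction directly from the closed-form expressions for $\varphi^Q$ and its differential, identifying the tangent space $\T_P\Gr(n,p)$ with $\R^{(n-p)\times p}$ via the bijection $\Delta \mapsto d\varphi^Q(\Delta)$. First I would fix $P\in\Gr(n,p)$, choose $Q=Q(P)\in O(n)$ with $P=QP_0Q^T$, and set $R_P := \varphi^Q \circ (d\varphi^Q_0)^{-1}\colon \T_P\Gr(n,p)\to\Gr(n,p)$, so that $R_P$ eats a genuine tangent vector $X\in\T_P\Gr(n,p)$, writes it as $X = d\varphi^Q_0(\Delta)$ for the unique $\Delta\in\R^{(n-p)\times p}$ (uniqueness/existence being exactly the bijectivity statement $d\varphi^Q_0\colon \R^{(n-p)\times p}\to \T_P\Gr(n,p)$ recalled just above the theorem), and returns $\varphi^Q(\Delta)=Q\varphi(\Delta)Q^T$. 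I would note this is well-defined on all of $\T_P\Gr(n,p)$ since $\varphi$ is globally defined on $\R^{(n-p)\times p}$ (shown earlier via the positive-definiteness of $I_p+B^TB$).

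Next I would check the three retraction axioms. Smoothness: $\varphi$ is a composition of smooth matrix operations (including inversion of the everywhere-invertible $I_p+B^TB$), $Q(\cdot)$ can be chosen smoothly locally, and $(d\varphi^Q_0)^{-1}$ is a fixed linear isomorphism, so $R_P$ is smooth, and smoothness in $P$ as well (this is the point where one invokes the standard relaxation, discussed in the paragraph before the theorem, that each $R_P$ need only be defined on an open neighborhood of $0$ — here in fact it is global). Centering: $R_P(0) = \varphi^Q(0) = Q\varphi(0)Q^T = QP_0Q^T = P$, using $\varphi(0)=P_0$ from the computation preceding \eqref{eq:Gr_tanspace_P0}. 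Local rigidity: $d(R_P)_0 = d\varphi^Q_0 \circ (d\varphi^Q_0)^{-1} = \operatorname{id}_{\T_P\Gr(n,p)}$ by the chain rule, which is immediate once $R_P$ has been built as that composition. So the only real content beyond bookkeeping is confirming that the two identifications of $\T_P\Gr(n,p)$ — the abstract one and the one via the image of $d\varphi^Q_0$ — are compatible, which is \cite[Def. 3.1.1]{Klingenberg:1978} as already cited, together with the explicit formula $d\varphi^Q(\Delta) = Q d\varphi_0(\Delta) Q^T$ and \eqref{eq:Gr_tanspace_P0}–\eqref{eq:GrTan}.

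There is one subtlety I would address explicitly: the dependence on the choice of $Q=Q(P)$. Since $P=QP_0Q^T = \tilde Q P_0 \tilde Q^T$ forces $\tilde Q = Q\,\mathrm{diag}(R_1,R_2)$ for some $R_1\in O(p)$, $R_2\in O(n-p)$, one should check that $R_P$ does not depend on this freedom — or, more economically, simply regard $\{\varphi^Q \mid Q\in O(n)\}$ as a redundant atlas and define the retraction at $P$ by picking any one admissible $Q$; the statement "gives rise to a retraction" only asks for the existence of such a family of maps, so I would phrase it that way and remark that well-definedness up to the stabilizer action is not actually needed for the conclusion. The main obstacle, such as it is, is not a hard estimate but getting the identification of tangent spaces airtight — making sure that "the tangent vector $X$" in the abstract sense \eqref{eq:GrTan_characterization} is the same object as "$d\varphi^Q_0(\Delta)$" so that $d(R_P)_0=\operatorname{id}$ is a genuine identity of maps on $\T_P\Gr(n,p)$ rather than a statement up to some implicit isomorphism. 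Everything else is routine verification from the closed forms already on the page.
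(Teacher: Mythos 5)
Your construction is the same as the paper's: you identify $\T_P\Gr(n,p)$ with $\R^{(n-p)\times p}$ via $d\varphi^Q_0$ and define the retraction as $R_P=\varphi^Q\circ(d\varphi^Q_0)^{-1}$, which is exactly the map $\hat\varphi^Q$ the paper builds (the paper extracts $\Delta$ from $X$ by $\bigl(\begin{smallmatrix}0\\ \Delta\end{smallmatrix}\bigr)=Q^TXQ\,I_{n,p}$, and that extraction is precisely $(d\varphi^Q_0)^{-1}$ since $d\varphi^Q_0(\Delta)=Q\bigl(\begin{smallmatrix}0&\Delta^T\\ \Delta&0\end{smallmatrix}\bigr)Q^T$). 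Where you diverge is in \emph{verifying} local rigidity: you get $d(R_P)_0=d\varphi^Q_0\circ(d\varphi^Q_0)^{-1}=\operatorname{id}$ for free from the chain rule, whereas the paper derives the closed form \eqref{eq:loc_para_tanspace} for $\hat\varphi^Q(X)$ in terms of $X+I$, differentiates it, and lands on $XP+PX-2PXP=X$ via \eqref{eq:GrTan_characterization}. Your route is more economical and logically subsumes the paper's computation; what the paper's explicit calculation buys is the formula \eqref{eq:loc_para_tanspace} itself, which incidentally makes your $Q$-ambiguity worry evaporate (it exhibits $\hat\varphi^Q(X)$ as the projector onto $\operatorname{colspan}((X+I)U)$ with $U=QI_{n,p}$, manifestly invariant under $Q\mapsto Q\operatorname{diag}(R_1,R_2)$). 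The one place where you assert rather than argue is the smooth dependence of $R_P$ on the base point $P$: "$Q(\cdot)$ can be chosen smoothly locally" is true but is the only genuinely non-formal ingredient, and the paper spends a paragraph on it (lift a path to a horizontal Stiefel path, extend to $Q(P(t))\in O(n)$ by Gram--Schmidt). You should supply at least that much --- e.g., a local smooth section of the submersion $O(n)\to\Gr(n,p)$, $Q\mapsto QP_0Q^T$ --- rather than leave it as an aside; everything else in your write-up is correct and complete.
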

\begin{proof}
Note that 
\begin{equation*}
    \{\Delta\mid \Delta\in\R^{(n-p)\times p}\}\cong \left\{\begin{pmatrix}
        0&\Delta^T\\
        \Delta&0
    \end{pmatrix}\mid \Delta\in\R^{(n-p)\times p}\right\}=\T_{P_0}\Gr(n,p),
\end{equation*}
and we may consider the `base map' $\varphi$ immediately as a map
\[
    \hat \varphi: \T_{P_0}\Gr(n,p) \to \Gr(n,p),
    ~ \begin{pmatrix}
        0&\Delta^T\\
        \Delta&0
    \end{pmatrix} \mapsto \varphi(\Delta).
\]
The same calculation as in \eqref{eq:Gr_tanspace_P0} shows that 
$d(\hat \varphi)_0(X) = X$ for all $X\in \T_{P_0}\Gr(n,p)$.

For an arbitrary parameterization $\varphi^Q$ around $P = QP_0Q^T\in\Gr(n,p)$, the transition to make $\varphi^Q$ a tangent space map is as follows.
Let $X\in \T_P\Gr(n,p)$. There exists $\Delta\in\R^{(n-p)\times p}$ such that $X = Q\begin{pmatrix}
        0&\Delta^T\\
        \Delta&0
    \end{pmatrix}Q^T$.
Therefore
$
   \begin{pmatrix}
    0\\
    \Delta
\end{pmatrix} = Q^TXQ
\begin{pmatrix}
    I_p\\
    0
\end{pmatrix}
$, 
The parameterization $\varphi^Q$ acts essentially on the block matrix  $$\begin{pmatrix}
    I_p\\
    \Delta
\end{pmatrix}
= Q^TXQ
\begin{pmatrix}
    I_p\\
    0
\end{pmatrix} + \begin{pmatrix}
    I_p\\
    0
\end{pmatrix}
= (Q^T(X + I_n)Q)\begin{pmatrix}
    I_p\\
    0
\end{pmatrix}.$$
Hence,
\begin{align}
\nonumber
    &\hat \varphi^Q: \T_P\Gr(n,p)\to \Gr(n,p), \quad
    \hat \varphi^Q(X) = Q\varphi\left(\Delta\right)Q^T\\
    \nonumber
    =& 
    Q 
    \Biggl(
    (Q^T(X+I)Q)\begin{pmatrix}
    I_p\\
    0
    \end{pmatrix} 
    \left[
        \begin{pmatrix}
            I_p &    0
        \end{pmatrix}
        (Q^T(X+I)Q)
    (Q^T(X+I)Q)\begin{pmatrix}
            I_p\\
            0
        \end{pmatrix}
    \right]^{-1}\\
    \nonumber
    & 
    \begin{pmatrix}
            I_p &    0
    \end{pmatrix}
    (Q^T(X+I)Q)
    \Biggr)
    Q^T\\
\label{eq:loc_para_tanspace}
=&
(X+I)Q\begin{pmatrix}
    I_p\\
    0
\end{pmatrix} 
\left[
\begin{pmatrix}
    I_p &    0
\end{pmatrix}
(Q^T(X+I)^2Q)\begin{pmatrix}
    I_p\\
    0
\end{pmatrix}
\right]^{-1}
\begin{pmatrix}
    I_p &    0
\end{pmatrix}
Q^T(X+I).
\end{align}

Now, we can compute $d(\varphi^Q)_0(X)=\eval{\deriv}_{t=0}\varphi^Q(tX)$ by differentiating \eqref{eq:loc_para_tanspace}.
This yields
\begin{align}
\nonumber
  d(\varphi^Q)_0(X)  
    =& 
  XQ\begin{pmatrix}
    I_p\\
    0
\end{pmatrix} 
\begin{pmatrix}
    I_p &    0
\end{pmatrix}
Q^T + 
  Q\begin{pmatrix}
    I_p\\
    0
\end{pmatrix} 
\begin{pmatrix}
    I_p &    0
\end{pmatrix}
Q^TX\\
\nonumber
& - 2Q\begin{pmatrix}
    I_p\\
    0
\end{pmatrix} 
\begin{pmatrix}
    I_p &    0
\end{pmatrix}
Q^T
X
Q\begin{pmatrix}
    I_p\\
    0
\end{pmatrix} 
\begin{pmatrix}
    I_p &    0
\end{pmatrix}
Q^T \\
\nonumber
& = XQP_0Q^T + QP_0Q^T X- 2QP_0Q^TXQP_0Q^T\\
\nonumber
& = XP + PX - 2PXP = XP + PX = X,
\end{align}
because of \eqref{eq:GrTan_characterization}.

It remains to show that $\hat \varphi^{Q}: \T_P\Gr(n,p)\to \Gr(n,p)$ depends smoothly on the base point $P$.
The coordinate chart $\hat \varphi^Q$ depends on $P$ through $Q(P)$.
Because $\Gr(n,p)$ is geodesically complete, any two points $P,\tilde P \in \Gr(n,p)$ can be connected by a geodesic.\footnote{For the argument, local paths would already be sufficient.}
Let $y:t\to y(t)\in \Gr(n,p)$ be a geodesic with $y(0) = P$, $y(1) = \tilde P$. The horizontal lift is a horizontal Stiefel geodesic $U:t\to U(t)\in \St(n,p)$ such that 
$y(t) = U(t)U(t)^T$ and $P = U(0)U(0)^T$, $\tilde P= U(1)U(1)^T$.
By the Gram--Schmidt process, there is an extension $Q(P(t)) = \begin{pmatrix}
    U(t), U_\bot(t)
\end{pmatrix}\in \textnormal{SO}(n)$, where $U_\bot(t)\in\St(n, (n-p))$ depends smoothly on $\qty(t, U(t))$. It holds $Q(P(0))\begin{pmatrix}
        I_p & 0\\
        0 &0
    \end{pmatrix}Q(P(0))^T = U(0)U(0)^T = P$
and
$Q(P(1))\begin{pmatrix}
        I_p & 0\\
        0 &0
    \end{pmatrix}Q(P(1))^T = U(1)U(1)^T = \tilde P$.
Hence, $Q$ can be constructed to depend smoothly on $P$.
As a consequence, the corresponding chart $\hat \varphi^{Q} = \hat \varphi^{Q(P)}$ also depends smoothly on $P$. 
\end{proof}

The Riemannian subspace distance (see \Cref{app:subspace_dist}) between $P_0$ and any $\varphi(B)$ is encoded in the SVD of the coordinate matrix $B$.
\begin{lemma}
Let $B\in\R^{(n-p)\times p}$ be arbitrary and let $Q\Sigma R^T=B$ be the compact SVD of $B$ with $\Sigma = \diag(\sigma_1,\ldots,\sigma_p)$. Then
\[
    \dist( P_0, \varphi(B)) = \sqrt{\sum_{k=1}^p \arctan(\sigma_k)^2}.
\]
The distance is bounded above by $\dist( P_0, \varphi(B))\leq \sqrt{p}\arctan(\|B\|_2)$.
\end{lemma}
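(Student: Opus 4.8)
The plan is to realize $\varphi(B)$ as the endpoint of an explicit Grassmann geodesic issuing from $P_0$, read off the length of that geodesic, and then certify via principal angles that this length is in fact the Riemannian distance.

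First I would put $\varphi(B)$ into Stiefel form. Writing the compact SVD of $B$ as $B=\Theta\Sigma\Xi^T$ with $\Theta\in\St(n-p,p)$, $\Sigma=\diag(\sigma_1,\dots,\sigma_p)$ and $\Xi\in O(p)$ (I rename the singular-vector factors $\Theta,\Xi$ to avoid a clash with $Q\in O(n)$), one has $I_p+B^TB=\Xi(I_p+\Sigma^2)\Xi^T$ with explicit SPD square root, and $W:=\begin{pmatrix}I_p\\ B\end{pmatrix}(I_p+B^TB)^{-1/2}$ satisfies $W^TW=I_p$ and $\colspan(W)=\colspan\begin{pmatrix}I_p\\ B\end{pmatrix}$; hence $W\in\St(n,p)$ and $\varphi(B)=WW^T$. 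Next I would exhibit the geodesic: set $\Delta:=\Theta\arctan(\Sigma)\Xi^T$, an SVD of $\Delta$ sharing the singular vectors of $B$. The geodesic \eqref{eq:Grassmann_proj_geo} from $P_0=I_{n,p}I_{n,p}^T$ with velocity $X=\begin{pmatrix}0&\Delta^T\\ \Delta&0\end{pmatrix}$ has Stiefel factor $\gamma(t)=\begin{pmatrix}\Xi\cos(t\arctan(\Sigma))\Xi^T\\ \Theta\sin(t\arctan(\Sigma))\Xi^T\end{pmatrix}$. Since every $\arctan(\sigma_k)\in[0,\tfrac\pi2)$, the diagonal matrix $\cos(\arctan(\Sigma))$ is invertible, so right-multiplying $\gamma(1)$ by the invertible matrix $\Xi\cos(\arctan(\Sigma))^{-1}\Xi^T$ yields $\begin{pmatrix}I_p\\ \Theta\tan(\arctan(\Sigma))\Xi^T\end{pmatrix}=\begin{pmatrix}I_p\\ B\end{pmatrix}$; hence $\colspan(\gamma(1))=\colspan\begin{pmatrix}I_p\\ B\end{pmatrix}$ and $c(1)=\gamma(1)\gamma(1)^T=\varphi(B)$. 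Because $\tr(X^TX)=2\normF{\Delta}^2$, the geodesic runs at constant speed $\|X\|_0=\normF{\Delta}=\normF{\arctan(\Sigma)}=\sqrt{\sum_k\arctan(\sigma_k)^2}$, whence $L(c|_{[0,1]})$ equals this value and $\dist(P_0,\varphi(B))\le\sqrt{\sum_k\arctan(\sigma_k)^2}$.

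To upgrade this to an equality I would compute the principal angles. The singular values of $I_{n,p}^TW=(I_p+B^TB)^{-1/2}=\Xi(I_p+\Sigma^2)^{-1/2}\Xi^T$ are $(1+\sigma_k^2)^{-1/2}=\cos(\arctan(\sigma_k))$, so the principal angles between $\colspan(I_{n,p})$ and $\varphi(B)$ are $\theta_k=\arctan(\sigma_k)\in[0,\tfrac\pi2)$. The closed-form expression for the Riemannian subspace distance (\Cref{app:subspace_dist}) then gives $\dist(P_0,\varphi(B))=\sqrt{\sum_k\theta_k^2}=\sqrt{\sum_k\arctan(\sigma_k)^2}$, matching the upper bound and proving the identity (the geodesic constructed above is, since all $\theta_k<\tfrac\pi2$, the unique minimizer). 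Finally, monotonicity of $\arctan$ together with $\sigma_k\le\sigma_1=\|B\|_2$ gives $\sum_k\arctan(\sigma_k)^2\le p\,\arctan(\|B\|_2)^2$, hence $\dist(P_0,\varphi(B))\le\sqrt{p}\,\arctan(\|B\|_2)$, which is the stated bound.

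The only step that is more than bookkeeping with the SVD is turning the length estimate into an equality, i.e. knowing that the exhibited geodesic is globally length-minimizing. This is precisely what the principal-angle distance formula of \Cref{app:subspace_dist} supplies, and it applies here because all principal angles lie strictly below $\tfrac\pi2$, so $\varphi(B)$ sits inside the injectivity radius of $\Gr(n,p)$ at $P_0$; everything else is direct computation.
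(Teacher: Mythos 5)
Your proof is correct and its decisive step is the same as the paper's: compute the Stiefel representative $W=\begin{pmatrix}I_p\\ B\end{pmatrix}(I_p+B^TB)^{-1/2}$, observe that $I_{n,p}^TW=(I_p+B^TB)^{-1/2}$ has singular values $(1+\sigma_k^2)^{-1/2}=\cos(\arctan(\sigma_k))$, and invoke the principal-angle distance formula \eqref{eq:subspace_dist}. The explicit geodesic you construct in the first half is a correct and instructive consistency check, but it is logically redundant, since the principal-angle computation alone already yields the exact distance.
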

\begin{proof}
Stiefel representatives of $P_0$ and $\varphi(B)= \begin{pmatrix}
        I_p\\
         B
    \end{pmatrix}\left(I_p+B^TB\right)^{-1}\begin{pmatrix}
        I_p&B^T
    \end{pmatrix}$
are 
\[
   U_0 := \begin{pmatrix}
        I_p\\
         0
    \end{pmatrix}, \quad
    \tilde U :=\begin{pmatrix}
        I_p\\
         B
    \end{pmatrix} T^{-1},
\]
where $T$ is a matrix square root of $I_p+B^TB=T^2$ (see also point (A) in the following subsection).
When using the SVD $B=Q\Sigma R^T\in \R^{(n-p)\times p}$, a matrix square root is obtained by 
$T = R \sqrt{I_p + \Sigma^2} R^T$.
To obtain the principal angles (\Cref{app:subspace_dist}), compute
\[
    U_0^T \tilde U = T^{-1} = R (I_p + \Sigma^2)^{-\frac12} R^T.
\]
By \eqref{eq:subspace_dist}, the subspace distance is
\begin{align*}
    \dist(P_0, \varphi(B))^2 &= \sum_{k=1}^p \qty(\arccos\left(\frac{1}{\sqrt{1+\sigma_k^2}}\right))^2 
     = \sum_{k=1}^p (\arccos\left(\cos(\arctan(\sigma_k))\right))^2\\
     &= \sum_{k=1}^p (\arctan(\sigma_k))^2.
\end{align*}
Hence, an upper bound for the subspace distance is 
\[ 
\dist(P_0, \varphi(B)) = \sqrt{\sum_{k=1}^p \qty(\arctan(\sigma_k))^2}\leq \sqrt{p}\arctan\qty(\|B\|_2).
\]
\end{proof}

\subsection{Numerical processing of Grassmann data}
\label{sec:numprocessing}
Formally, the local coordinates for the Grassmannian $\Gr(n,p)$ reviewed in \Cref{sec:NonMatCompCoords} work with projectors, i.e. matrices in $\sym(n)\subset \R^{n\times n}$.
Given a subspace $\mathcal{U}\in\Gr(n,p)$ with orthonormal basis $U\in \St(n,p)$, the orthogonal projector is $UU^T\in \sym(n)$.
Yet, the most important practical applications that feature Grassmann data are those that emphasize and exploit the low-rank structure of $UU^T$. Especially, if $n\gg p$, an $(n\times n)$ matrix must never be formed. This means that in an algorithmic implementation, the coordinate chart $\Psi$ receives a matrix representative $U= \begin{pmatrix}
    U_1\\
    U_2
\end{pmatrix}\in \St(n,p)$ as an input, where the subblocks are of dimension $U_1\in \R^{p\times p}$, $U_2\in \R^{(n-p)\times p}$.
The output is evaluated as
\begin{equation}
\label{eq:MD-free-practice}
    \Psi(UU^T) = \Psi
    \begin{pmatrix}
    U_1U_1^T & U_1 U_2^T\\
    U_2U_1^T & U_2U_2^T
    \end{pmatrix}
    = U_2U_1^T (U_1U_1^T)^{-1} = U_2U_1^{-1}\in\R^{(n-p)\times p}.
\end{equation}
Thus, the largest dimension of matrices formed in code is $(n\times p)$.
The upper $(p\times p)$--subblock $U_1$ may be replaced with any subselection of $p$ rows $U$ that are linearly independent, with the subblock $U_2$ then being chosen as the complementary subset of rows of $U$.

The associated parameterization $\varphi$ can be evaluated as follows:
\begin{itemize}
\item[(A)] Using matrix square roots: 
Given $B\in \R^{(n-p)\times p}$,
compute a matrix square root $T$ such that
\[
    (I_p + B^TB) = T^2.
\]
The associated $(n\times p)$ Stiefel representation of the projector $\varphi(B)$
is $U:= \begin{pmatrix}
    I_p\\
    B
\end{pmatrix} T^{-1}$. 
Obviously, $U^TU=I_p$.
\item[(B)] Using the Cholesky decomposition:
  Compute $L\in \R^{p\times p}$, lower triangular, such that
  \[
    (I_p + B^TB) = LL^T
  \]
  The associated $(n\times p)$ Stiefel representation of the projector $\varphi(B)$
is $U:= \begin{pmatrix}
    I_p\\
    B
\end{pmatrix} L^{-T}$. 
Again, $U^TU=I_p$.
\end{itemize}

When processing derivative data in Stiefel representation, it is important to note that if $U(t)$ is a Stiefel representation of a curve $U(t)U(t)^T=P(t)\in \Gr(n,p)$, then $\dot U(t)$ is not necessarily a {\em horizontal} Stiefel representation of $\dot P(t)$. To obtain a horizontal Stiefel representation of $\dot P(t)$, compute $\Delta(t)=\qty(\dot U(t)U(t)^T-U(t)\dot U(t)^T)U(t)$, which is the horizontal lift of $\dot P(t)$. 

\subsection{Lagrange- and Hermite interpolation}

We are now ready to discuss interpolation. We assume throughout the section that for a given set of Lagrange-- or Hermite data $\mcS=\qty{t_i,f_i,f_i'}_{i=1}^d$, all the points $f_i\in \Gr(n,p)$ are in the domain of a single chart $\Psi:\Gr(n,p)\to \R^{(n-p)\times p}$. To map the Grassmann data to the local coordinate domain, we use that the local chart $\Psi$ is a local diffeomorphism, with differential $d\Psi_{P}:\T_P\Gr(n,p)\to  \T_{\Psi(P)} \R^{(n-p)\times p}\simeq \R^{(n-p)\times p}$
being a vector space isomorphism. Hence points and tangent vectors can be mapped bijectively to the local coordinate domain, and we obtain $\bar \mcS=\qty{t_i,\bar f_i,\bar f_i'}_{i=1}^d\subset\R^{(n-p)\times p}$. Similar to interpolation in normal coordinates, the vector space structure of $\R^{(n-p)\times p}$ allows us to apply any interpolation scheme from classical interpolation theory. The well--known Lagrange interpolation polynomial is stated in \Cref{sec:Int_in_normal_coords}. For Hermite interpolation, we use the formulation as in \cite[Section 8.5]{Alfio:2007}. Considering the special case of two data samples $\qty{t_i,\bar f_i,\bar f_i'}_{i=1}^2$, we construct the cubic Hermite polynomial
\begin{equation*}
    \gamma(t)=L_{00}(t)\bar f_0+L_{10}(t)\bar f_1+L_{01}(t)\bar f_0'+L_{11}(t)\bar f_1'.
\end{equation*}
The coefficient functions $L_{00},\dots,L_{11}$ are listed in \Cref{sec:Int_in_normal_coords}. The resulting Hermite interpolant of $f$ is $\hat f(t^*)=\varphi(\gamma(t^*))$. Replacing $\gamma(t)$ with the Lagrange polynomial based on the data in local coordinates gives similarly the Lagrange interpolant of $f$. 

To map derivative data from $\T_{f_i}\Gr(n,p)$ to the local coordinates, we compute the directional derivative of $\Psi$. 
With the quotient map $\Pi: \St(n,p) \to \Gr(n,p), U\mapsto UU^T$ we have the Stiefel counterpart of $\Psi$,
\[
\tilde \Psi : \St(n,p) \to \R^{(n-p)\times p}, \quad U \mapsto \tilde \Psi(U) = (\Psi \circ \Pi)(U).
\]
It holds $d\tilde \Psi_U:\T_U\St(n,p) \to \R^{(n-p)\times p}$ and 
\[
    d\Psi_{\Pi(U)}\qty(d\Pi_U(T))) = d\tilde \Psi_U(T).
\]
Let $U(t)=\begin{pmatrix}
    U_1(t)\\
    U_2(t)
\end{pmatrix}$ be a Stiefel curve with $U(0) = U$, $\dot U(0) = T=\begin{pmatrix}
    T_1\\
    T_2
\end{pmatrix}$ and consider the corresponding Grassmann curve $P(t)=\Pi(U(t)) = U(t)U(t)^T$ with  $P(0)=P,\dot P(0)=X =D\Pi_U(T)= TU^T + UT^T$.
Then
\begin{equation}\label{eq:d_psi}
    d\Psi_P(X) = d\tilde \Psi_U(T) = 
    \eval{\frac{\dd}{\dd t}}_{t=0} \tilde \Psi(U(t)) = T_2 U_1^{-1} - U_2 U_1^{-1} T_1 U_1^{-1}\in \R^{(n-p)\times p}.
\end{equation}

This formula allows one to compute the Grassmann derivative $d\Psi_P(X)$ by using Stiefel representatives with matrix blocks of size $(n-p)\times p$ and $p\times p$.
Since $d\Psi_P$ has the inverse map $d\varphi_{\Psi(P)}$, we have that Hermite interpolating $f:I\to \Gr(n,p)$ on the sample data $\mcS=\qty{t_i,f_i,f_i'}_{i=1}^d$ is equivalent to Hermite interpolating $\bar f=\Psi\circ f$ on the sample data $\bar\mcS=\qty{t_i, \bar f_i, \bar f_i'}_{i=1}^d= \qty{t_i, \Psi(f_i), d\Psi_{f_i}(f_i')}_{i=1}^d$, and it is by this construction that the Hermite polynomial $\gamma (t)$ stated above is the Hermite interpolant in local coordinates of $\bar f$. 

Comparing to interpolation in Riemannian normal coordinates, mapping derivative data to the local coordinate domain does not require a finite differences--approxi\-mation. 

\subsection{Data processing error and maximum-volume coordinates}\label{sec:ill_to_well} 

To motivate the following investigations, consider the Stiefel matrix $U=\begin{pmatrix}
    0_p&
    I_p&
    0_{n-2p\times p}
\end{pmatrix}^T$. Computing the local coordinates associated with the projector $P=UU^T$ without a row--selection strategy is impossible. In practice this type of matrix does hardly arise, but Stiefel representations $U=\begin{pmatrix}
    U_1\\U_2
\end{pmatrix}$ with $U_1$ being ill-conditioned are not unrealistic. This causes points which are close on the Grassmann manifold to be mapped to local coordinates that are very far apart (see \Cref{fig:manifold_dist_vs_local_dist}). It may even make it impossible to obtain such coordinates due to numerical errors,
and interpolation of the coordinate images becomes futile.
\begin{figure}[!ht]
    \centering
    \includegraphics[width=0.8\linewidth]{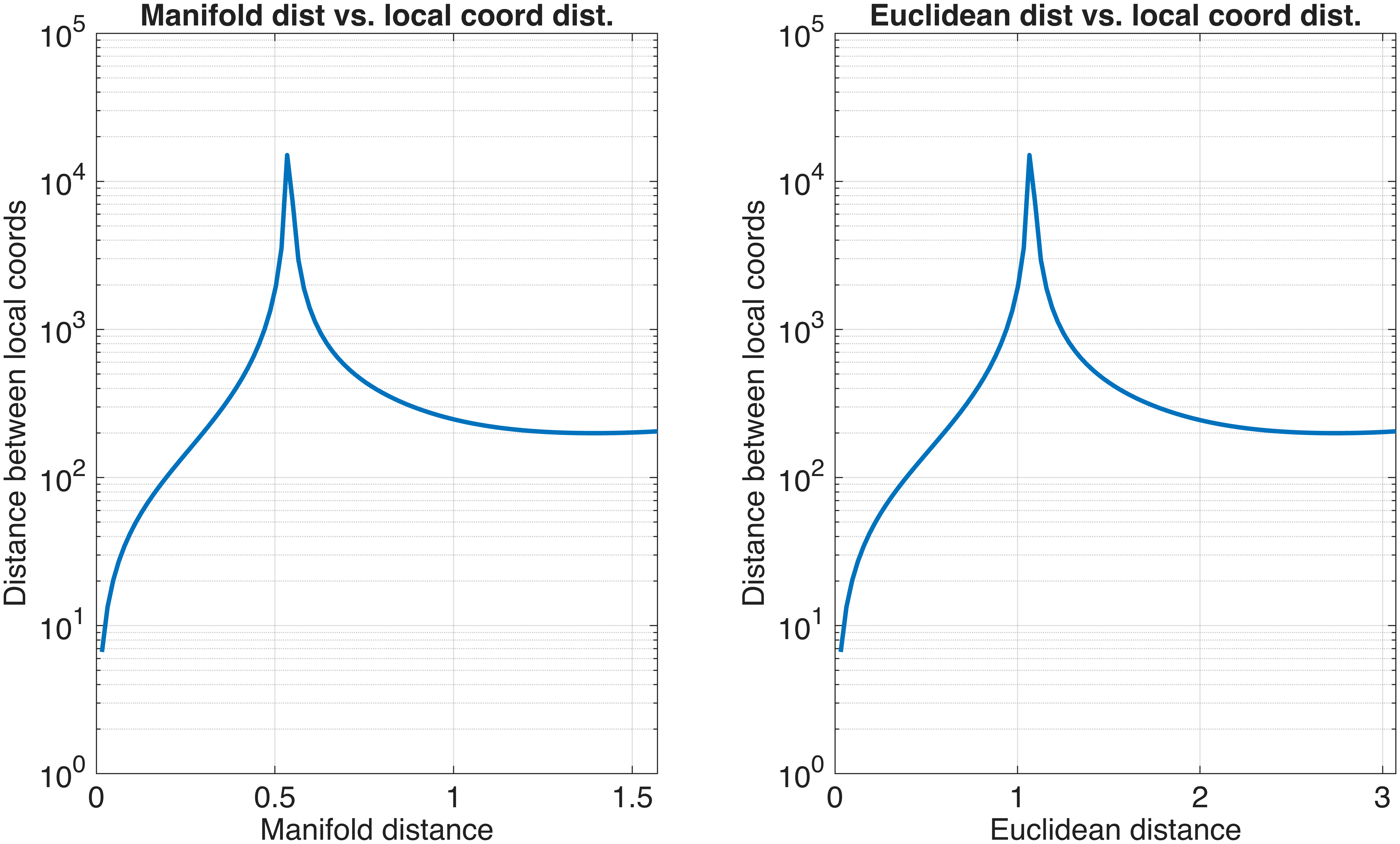} 
    \caption{Distance between a random point $U\in \Gr(n,p)$ (in Stiefel representation with $n=1000,p=20$, obtained from a QR--decomposition of a random $n\times p$ matrix) and synthetic points $\tilde U_t = \Exp_U(t\Delta)$, where $\Delta$ is a fixed tangent vector of unit norm $\|\Delta\|_0=1$ and $t\in[0,\pi/2]$ is varied.
By construction, the Riemannian distance is $\dist(U,\tilde U_t)=t$ (x-axis, left plot); the Euclidean distance is $\normF{U-\tilde U_t}$ (x-axis, right plot). 
    The distance in local coordinates is $\normF{\Psi(U)-\Psi(\tilde U_t)}$ (y-axis, both plots).
    }
    \label{fig:manifold_dist_vs_local_dist}
\end{figure}

The next result bounds the distance between the outputs of the coordinate chart. 
\begin{lemma}\label{lem:psi_dist}
    Given two Stiefel representations 
    $U= \begin{pmatrix}
        U_1\\
        U_2
    \end{pmatrix},
    V= \begin{pmatrix}
        V_1\\
        V_2
    \end{pmatrix}$ of projectors $P_U,P_V\in \Gr(n,p)$,
    the distance of their coordinate images under $\Psi$ from \eqref{eq:loc_coord_psi} is bounded by
    \begin{equation*}
        \normF{\Psi(P_U)-\Psi(P_V)}\leq \sqrt{\normF{U_1^{-1}}^2 - p}
        + 
        \sqrt{\normF{V_1^{-1}}^2 - p}.
    \end{equation*}
\end{lemma}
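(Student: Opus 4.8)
The plan is to reduce the claim to two independent one--sided estimates via the triangle inequality and then to evaluate each side exactly by an elementary trace computation. First I would recall from \eqref{eq:MD-free-practice} that for a Stiefel representative $U=\begin{pmatrix}U_1\\U_2\end{pmatrix}$ of a projector $P_U$ with $U_1$ invertible, the chart acts as $\Psi(P_U)=U_2U_1^{-1}$, and likewise $\Psi(P_V)=V_2V_1^{-1}$; the hypothesis that $\normF{U_1^{-1}},\normF{V_1^{-1}}$ appear in the bound already presupposes $U_1,V_1\in\GL(p)$, i.e. $P_U,P_V\in\Omega$. Applying the Frobenius triangle inequality gives
\[
\normF{\Psi(P_U)-\Psi(P_V)}\leq \normF{U_2U_1^{-1}}+\normF{V_2V_1^{-1}},
\]
so it suffices to show $\normF{U_2U_1^{-1}}=\sqrt{\normF{U_1^{-1}}^2-p}$ (and the analogue for $V$), which is in fact an equality rather than just a bound.

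The key step is the trace computation. Since $U^TU=I_p$, the block identity $U_1^TU_1+U_2^TU_2=I_p$ holds, hence $U_2^TU_2=I_p-U_1^TU_1$. Therefore
\[
\normF{U_2U_1^{-1}}^2=\tr\!\left(U_1^{-T}U_2^TU_2U_1^{-1}\right)
=\tr\!\left(U_1^{-T}(I_p-U_1^TU_1)U_1^{-1}\right)
=\tr\!\left(U_1^{-T}U_1^{-1}\right)-\tr(I_p)=\normF{U_1^{-1}}^2-p.
\]
The same argument with $V$ in place of $U$ yields $\normF{V_2V_1^{-1}}^2=\normF{V_1^{-1}}^2-p$. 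Substituting these two identities into the triangle-inequality estimate above gives precisely
\[
\normF{\Psi(P_U)-\Psi(P_V)}\leq \sqrt{\normF{U_1^{-1}}^2-p}+\sqrt{\normF{V_1^{-1}}^2-p},
\]
which is the claim.

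I do not expect a genuine obstacle here: the argument is a two-line trace manipulation combined with the triangle inequality. The only points that merit a sentence of care are (i) noting that $\Psi$ applied to a projector in Stiefel form is the matrix $U_2U_1^{-1}$, so that the left-hand side is literally $\normF{U_2U_1^{-1}-V_2V_1^{-1}}$; (ii) observing that $\normF{U_1^{-1}}$ is independent of the chosen representative in $[U]\subset\St(n,p)$, since replacing $U$ by $UR$ with $R\in O(p)$ replaces $U_1^{-1}$ by $R^TU_1^{-1}$, leaving the Frobenius norm unchanged — so the right-hand side of the bound is well defined; and (iii) that $\normF{U_1^{-1}}^2\geq p$ always holds (so the square roots are real), which is automatic because $\normF{U_1^{-1}}^2=p+\normF{U_2U_1^{-1}}^2\geq p$. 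It may also be worth remarking, since the intermediate identity is sharp, that the stated inequality is tight exactly when $U_2U_1^{-1}$ and $V_2V_1^{-1}$ are non-negative multiples of one another (the equality case of the Frobenius triangle inequality).
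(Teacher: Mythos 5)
Your proof is correct and follows the paper's own argument exactly: the triangle inequality $\normF{U_2U_1^{-1}-V_2V_1^{-1}}\leq\normF{U_2U_1^{-1}}+\normF{V_2V_1^{-1}}$ followed by the trace identity $\normF{U_2U_1^{-1}}^2=\normF{U_1^{-1}}^2-p$ derived from $U_1^TU_1+U_2^TU_2=I_p$. The extra remarks on representative-independence and the equality case are sound but not needed.
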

\begin{proof}
From \eqref{eq:MD-free-practice},
\[
    \normF{\Psi(P_U)-\Psi(P_V)} = \normF{U_2U_1^{-1}- V_2V_1^{-1}} \leq
    \normF{U_2U_1^{-1}} + \normF{V_2V_1^{-1}}.
\]
Because $U\in \St(n,p)$, $I = U_1^TU_1 + U_2^TU_2$. Hence, 
\[
  \norm{U_2U_1^{-1}}_F^2 
  = \trace U_1^{-T}U_2^TU_2U_1^{-1} 
  = \trace U_1^{-T}(I_p - U_1^TU_1)U_1^{-1}  
  = \norm{U_1^{-1}}_F^2 - p.
\]

Likewise for $V$.
\end{proof}
\begin{remark}
    \label{rem:max_vol_bound}
    The Lemma above shows that if $\normF{U_1^{-1}}$ and $\normF{V_1^{-1}}$ are small, then the upper bound on the spread is small as well. 
    To optimize the bound in the lemma is to minimize the norm of the inverse of the block $U_1$. Exactly the same challenge face CUR-matrix decompositions (also called pseudo-skeleton matrix approximations) \cite{Goreinov:1997}
    and the discrete empericial interpolation method (DEIM), \cite{Chaturantabut:2010,DrmacGugercin:2015}, and it is closely related to rank-revealing matrix facorizations
    \cite{ChandrasekaranIpsen:1994, GuEisenstat:1996, damle:2024}.
    In fact, a minor modification of \cite[Lemma 2.1]{Goreinov:1997} shows that if the block $U_1$ is formed by $p$ rows that maximize the `volume' $|\det(U_1)|$, then 
    \[
    \normF{U_1^{-1}}\leq 
    \sqrt{ p(n-p) + p}.
    \]
    For a coordinate chart $\Psi$ that is based on the optimal choice of the subblock $U_1$, we thus obtain
    \[
        \normF{\Psi(P_U)} \leq \sqrt{ p(n-p) + p - p} = \sqrt{p(n-p)}.
    \]
\end{remark}

These observations suggest to optimize the subblock selection.
This is equivalent to computing a permutation matrix $\mcP\in \R^{n\times n}$ such that 
\begin{equation}\label{eq:perm_stiefel}
    \mcP\begin{pmatrix}
        U_1\\
        U_2
    \end{pmatrix}=\begin{pmatrix}
        \tilde U_1\\
        \tilde U_2
    \end{pmatrix}, \quad \normF{\tilde U_1^{-1}} <\normF{U_1^{-1}}.
\end{equation} 
Computing the local coordinates of the permuted blocks is the same as composing the local coordinate chart with a map $\Upsilon_{\mcP}:\Gr(n,p)\to \Gr(n,p), P\mapsto \mcP P\mcP^T$ to obtain the modified chart $\Psi\circ \Upsilon:\Gr(n,p)\to \R^{(n-p)\times p}$. 

By \cite[Lemma 2.1]{Goreinov:1997}, a practical approach to optimize the chart is to use a volume maximizer for selecting the subblock $U_1$.
We will call the resulting chart a {\em maximum--volume local coordinate chart} or {\em MV-chart} for short.

The literature knows a large variety of methods for this task, see \cite{damle:2024} and we have no intention (and no need) to decide which method works best.
In the numerical experiments,  we apply the maxvol method of \cite[Algorithm 1]{Goreinov:2010}
to compute the permutation matrix $\mcP$.
\begin{algorithm}
    \begin{algorithmic}[1]
        \Require Hermite data $\qty{t_i,f_i,f_i'}_{i=1}^d\in I\times \Gr(n,p) \times T\Gr(n,p)$
        \For{$i=1$ to $d$}
        \State Apply the \texttt{maxvol}--algorithm of \cite{Goreinov:2010} on $f_i$ to obtain $\mcP_i$
        \EndFor
        \State Select $\mcP$ among $\qty{\mcP_1,\dots,\mcP_d}$ 
        that minimizes $\max_{i=1,\ldots,d} \{\| \left( f_i[1:p,1:p]\right)^{-1}\|_F\}$.
    \end{algorithmic}
    \caption{Compute permutation $\mcP$ and prepare for preprocessing}
    \label{alg:prepare_data}
\end{algorithm}

Given a set of Grassmann data $\qty{f_i}_{i=1}^d$ in Stiefel representation, for consistency in the interpolation procedure, one and the same permutation matrix $\mcP$
must be chosen when preparing the data for preprocessing.
Therefore, the actual task is to find a single permutation matrix such that the upper $p\times p$ blocks of each of the Grassmann samples $f_i, i=1,\dots, d$ are well--conditioned. 

For example in model reduction applications, where the data $\qty{f_i}_{i=1}^d$ often stems from reduced bases for parametric dynamical systems, it is to be expected that the rows that contain essential independent information are located in similar positions throughout the data set.

\subsection{The conditioning of the local parameterizations}
\label{sec:para_cond}

The absolute condition number of a continuously differentiable map $f:\R^n\to \R^m$ is \cite[Theorem 3.1]{Higham:2008:FM}
\begin{equation*}
    \cond(f,X)=\max_{\norm{D}=1}\norm{df_X(D)}.
\end{equation*}
The next lemma provides a bound for the condition number of the local parameterization $\varphi$ from \eqref{eq:loc_coord_varphi}. This result will be used to quantify how the interpolation errors in local coordinates are amplified when forwarded to the manifold.  Mind that $d\varphi_B:\R^{(n-p)\times p} \to \T_{\varphi(B)}\Gr(n,p)\subset\sym(n)\subset \R^{n\times n}$. Since we consider the Grassmannian of rank-$p$ projectors as an embedded submanifold of $(\sym(n), \langle \cdot,\cdot\rangle_0)$ with metric inherited from this space, we are required to compute  $\|d\varphi_B(\Delta)\|_0$ in the canonical metric, i.e., the Euclidean metric up to a factor of $\frac12$, if we want the results to be compatible with the Riemannian measures.
The appropriate norm on the input domain $\R^{(n-p)\times p}$ is the Frobenius norm $\normF{\cdot}$. For the linear maps $d\varphi_B$, we use the operator norm
$\|d\varphi_B\|_{\textnormal{F},0} = \sup_{V\neq 0}\frac{\|d\varphi_B(V)\|_0}{\normF{V}}= \max_{\normF{V}=1}\|d\varphi_B(V)\|_0$.
\begin{lemma}
    \label{lem:cond_varphi}
    Let $B\in\R^{(n-p)\times p}$ with a compact SVD of $B=U\Sigma V^T$ with $\Sigma = \diag(\sigma_1,\ldots,\sigma_p)$.
    The condition number of $\varphi$ at $B$ is bounded by
    \begin{equation}\label{eq:cond_varphi}
        \cond_0(\varphi,B) 
        \leq 
        \sqrt{2} \sqrt{\left(\frac{1}{(1+\sigma_{p}^2)^2} + 
    \max_{i=1,\ldots p}\qty{ \frac{\sigma_i^2}{(1+\sigma_{i}^2)^2}}\right)} + 1.
    \end{equation}
    The global worst-case bound is
    \begin{equation}\label{eq:cond_varphi_global}
        M:= \sup_{B\in\R^{(n-p)\times p }}\{\cond_0(\varphi,B)\}
    \leq \sqrt{\frac{5}{2}}+1 \approx 2.5811.
    \end{equation}
\end{lemma}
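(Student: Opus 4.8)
The plan is to compute the differential $d\varphi_B$ explicitly, reduce to the case $B = \Sigma$ diagonal by exploiting the orthogonal invariance of the construction, and then bound the operator norm of the resulting (now very structured) linear map entrywise. First I would write $\varphi(B) = F(B)(I_p + B^TB)^{-1}F(B)^T$ with $F(B) = \begin{pmatrix} I_p \\ B \end{pmatrix}$, and apply the product rule together with the identity $\deriv[t]\big|_{t=0}(M + tN)^{-1} = -M^{-1}NM^{-1}$ to get
\begin{equation*}
    d\varphi_B(\Delta) = \dot F (I_p + B^TB)^{-1}F^T - F(I_p+B^TB)^{-1}(\dot B^T B + B^T\dot B)(I_p+B^TB)^{-1}F^T + F(I_p+B^TB)^{-1}\dot F^T,
\end{equation*}
where $\dot F = \begin{pmatrix} 0 \\ \Delta\end{pmatrix}$. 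Next I would use the substitution $\varphi^Q(B) = Q\varphi(B)Q^T$-type invariance: since the SVD gives $B = U\Sigma V^T$, conjugating by the block-orthogonal matrix $\operatorname{diag}(V, U)$ turns $\varphi(B)$ into $\operatorname{diag}(V,U)\,\varphi(\Sigma)\,\operatorname{diag}(V,U)^T$ and carries $\Delta$ to $U^T\Delta V$, which ranges over all of $\R^{(n-p)\times p}$ as $\Delta$ does; because that conjugation is an isometry for $\langle\cdot,\cdot\rangle_0$ and for $\normF{\cdot}$, it suffices to bound $\|d\varphi_\Sigma\|_{\textnormal{F},0}$.

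With $B = \Sigma = \operatorname{diag}(\sigma_1,\dots,\sigma_p)$ padded by zeros, $(I_p + \Sigma^T\Sigma)^{-1} = \operatorname{diag}\big(\tfrac{1}{1+\sigma_i^2}\big)$ is diagonal, and the three terms above become explicit: the first and third are $\begin{pmatrix} 0 \\ \Delta D\end{pmatrix}\begin{pmatrix}I_p & 0\end{pmatrix}$ and its transpose with $D = \operatorname{diag}(\tfrac{1}{1+\sigma_i^2})$, while the middle term is supported on the top-left $p\times p$ block with a Hadamard-type weighting of $\Sigma\Delta_1 + \Delta_1^T\Sigma$ (here $\Delta = \begin{pmatrix}\Delta_1\\ \Delta_2\end{pmatrix}$). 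I would then split $d\varphi_\Sigma(\Delta)$ into its block pieces, compute $\|\cdot\|_0^2 = \tfrac12\normF{\cdot}^2$ of each, and bound each weight by $\max_i \tfrac{1}{(1+\sigma_i^2)^2}$ or $\max_i \tfrac{\sigma_i^2}{(1+\sigma_i^2)^2}$; noting $\sigma_p = \sigma_{\min}$ gives the $\tfrac{1}{(1+\sigma_p^2)^2}$ term. Collecting the off-diagonal blocks (which appear twice, top-right and bottom-left, hence contribute the factor accounting for the $\sqrt 2$) against the constraint $\normF{\Delta} = 1$ yields the bound \eqref{eq:cond_varphi} — I would want to be slightly careful that the "$+1$" accounts cleanly for separating the $\dot F$-terms from the middle term via the triangle inequality in operator norm, rather than trying to combine all three terms into a single quadratic form.

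For the global bound \eqref{eq:cond_varphi_global}, once \eqref{eq:cond_varphi} is in hand it is a one-variable calculus exercise: set $g(s) = \tfrac{1}{(1+s)^2}$ and $h(s) = \tfrac{s}{(1+s)^2}$ for $s = \sigma^2 \geq 0$; $g$ is decreasing with supremum $1$ at $s=0$, and $h$ attains its maximum $\tfrac14$ at $s=1$. The two maxima need not be attained simultaneously, so the honest supremum of the sum $g(\sigma_p^2) + \max_i h(\sigma_i^2)$ over all admissible spectra is $1 + \tfrac14 = \tfrac54$ (take $\sigma_p \to 0$ for the first term and some other $\sigma_i = 1$ for the second, which is possible as long as $p \geq 2$; for $p = 1$ one checks the even smaller value directly). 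Hence $M \leq \sqrt{2}\sqrt{5/4} + 1 = \sqrt{5/2} + 1 \approx 2.5811$. The main obstacle, I expect, is the bookkeeping in the block decomposition of $d\varphi_\Sigma(\Delta)$ — keeping track of which entries of $\Delta$ feed which block of the output and with what weight, so that the Frobenius norm of the image is correctly bounded against $\normF{\Delta}^2 = \sum_{ij}\Delta_{ij}^2$ without double-counting; the symmetrization $\Sigma\Delta_1 + \Delta_1^T\Sigma$ in the middle term is the delicate spot, since its Frobenius norm is not simply controlled by $\normF{\Delta_1}$ alone but interacts with the diagonal $\Sigma$.
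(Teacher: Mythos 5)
Your computation of $d\varphi_B$ agrees with the paper's, and your treatment of the outer terms and of the global bound \eqref{eq:cond_varphi_global} is sound. The genuine gap is the middle term. You propose to reduce to $B=\Sigma$ via the SVD (which is legitimate, provided you use a \emph{full} SVD so that $\operatorname{diag}(V,U)\in O(n)$ --- the compact factor $U\in\R^{(n-p)\times p}$ cannot be used for conjugation) and then bound the resulting blocks entrywise, but you leave precisely this step unfinished and flag it yourself as ``the delicate spot.'' Moreover, two of your structural claims are wrong as stated: the middle term $-F S(\Delta^T\Sigma+\Sigma^T\Delta)S F^T$ is \emph{not} supported on the top-left $p\times p$ block (sandwiching by $F=\begin{pmatrix}I_p\\\Sigma\end{pmatrix}$ populates all four blocks), and the first term at $B=\Sigma$ is $\begin{pmatrix}0&0\\\Delta S&\Delta S\Sigma^T\end{pmatrix}$, not $\begin{pmatrix}0&0\\\Delta D&0\end{pmatrix}$ --- the discarded block $\Delta S\Sigma^T$ is exactly what produces the $\max_i\sigma_i^2/(1+\sigma_i^2)^2$ contribution. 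The entrywise route for the middle term \emph{can} be pushed through (a Cauchy--Schwarz of the form $(\sigma_j a+\sigma_i b)^2\le(\sigma_i^2+\sigma_j^2)(a^2+b^2)$ combined with $\tfrac{\sigma_i^2+\sigma_j^2}{(1+\sigma_i^2)(1+\sigma_j^2)}\le 1$ yields the needed norm bound of $1$), but as written the proposal does not establish the ``$+1$'' in \eqref{eq:cond_varphi}.

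The paper avoids all of this bookkeeping with one observation you missed: the middle term can be rewritten as $-P\begin{pmatrix}0&\Delta^T\\\Delta&0\end{pmatrix}P$ with $P=\varphi(B)$ an orthogonal projector, so its canonical norm is at most $\norm{\begin{pmatrix}0&\Delta^T\\\Delta&0\end{pmatrix}}_0=\normF{\Delta}=1$ for any $B$, with no diagonalization needed; the outer terms are then handled directly via $\normF{XY}\le\norm{X}_2\normF{Y}$. If you adopt that identity for the middle term, the rest of your argument goes through and the SVD reduction becomes unnecessary.
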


\begin{proof}
For $B\in \R^{(n-p)\times p}$ write $P=\varphi(B)$ and $S=S(B) = (I_p+B^TB)^{-1}$. The differential in direction $\Delta\in\R^{(n-p)\times p}$, is
    \begin{align}
    \nonumber
    d\varphi_B(\Delta)=&\begin{pmatrix}
        0\\
        \Delta
    \end{pmatrix}S\begin{pmatrix}
        I_p&B^T
    \end{pmatrix}
    %
    %
    -\begin{pmatrix}
        I_p\\
        B
    \end{pmatrix}S(\Delta^TB+B^T\Delta)S\begin{pmatrix}
        I_p&B^T
    \end{pmatrix}
    %
    %
    +\begin{pmatrix}
        I_p\\
        B
    \end{pmatrix}S\begin{pmatrix}
        0&\Delta^T
    \end{pmatrix}\\
    \nonumber
    =&\begin{pmatrix}
        0&0\\
        \Delta S&\Delta SB^T
    \end{pmatrix}
    %
    %
    -\begin{pmatrix}
        I_p\\
        B
    \end{pmatrix}S\begin{pmatrix}
        I_p&B^T
    \end{pmatrix}\begin{pmatrix}
        0&\Delta^T\\
        \Delta&0
    \end{pmatrix}\begin{pmatrix}
        I_p\\
        B
    \end{pmatrix}S\begin{pmatrix}
        I_p&B
    \end{pmatrix}
    +\begin{pmatrix}
        0&S\Delta^T\\
        0&BS\Delta^T
    \end{pmatrix} \nonumber \\
    \label{eq:varphi_diff}
    =&\begin{pmatrix}
        0&0\\
        \Delta S&\Delta S B^T
    \end{pmatrix}-P\begin{pmatrix}
        0&\Delta^T\\
        \Delta&0
    \end{pmatrix}P+\begin{pmatrix}
        0&S \Delta^T\\
        0&BS \Delta^T
    \end{pmatrix}.
\end{align}
To compute a bound for $\cond_0(\varphi,B)=\norm{d\varphi_B}_{\textnormal{F},0}$, let $\Delta\in \R^{(n-p)\times p}$ with $\normF{\Delta}=1$. 
 Because $P=\varphi(B)$ is an orthogonal projector, the middle term in \eqref{eq:varphi_diff} is bounded by 
 \begin{equation*}
     \norm{P\begin{pmatrix}
     0&\Delta^T\\
     \Delta&0
 \end{pmatrix}P}_0^2\leq 
     \norm{\begin{pmatrix}
     0&\Delta^T\\
     \Delta&0
 \end{pmatrix}}^2_0
 = \frac12
 \tr(\begin{pmatrix}
     \Delta^T\Delta & 0\\
     0 & \Delta\Delta^T
 \end{pmatrix})=\tr(\Delta^T\Delta)=1.
 \end{equation*}
The first and last term of $d\varphi_B(\Delta)$ in \eqref{eq:varphi_diff} have the same norm.
\begin{align*}
    \norm{\begin{pmatrix}
        0&S\Delta^T\\
        0&BS\Delta^T
    \end{pmatrix}}_0^2 &= 
    \frac12 \trace \begin{pmatrix}
        0&0\\
        \Delta S&\Delta SB^T
    \end{pmatrix}
    \begin{pmatrix}
        0&S\Delta^T\\
        0&BS\Delta^T
    \end{pmatrix}\\
    &=\frac12\trace\begin{pmatrix}
        0& 0\\
0&\Delta S S\Delta^T +   \Delta SB^T BS\Delta^T\end{pmatrix}\\
    &= \frac12(\normF{S\Delta^T}^2 + \normF{BS\Delta^T}^2)\\
    &\leq \frac12\left( \norm{S}_2^2\normF{\Delta}^2 + \norm{BS}_2^2\normF{\Delta}^2\right)\\
    &= \frac12 \left(\frac{1}{(1+\sigma_{\min}^2)^2} + 
    \max_{i=1,\ldots p}\qty{ \frac{\sigma_i^2}{(1+\sigma_{i}^2)^2}}\right),
\end{align*}
where we have used that $\|XY\|_F\leq \|X\|_2\|Y\|_F$, see \cite[Lemma 3]{zimmermannstoyeSIMAX:2024}.
Since $g(t) = \frac{1}{(1+x^2)^2}\leq 1$ and $f(t)=\frac{t^2}{(1+t^2)^2}\leq \frac{1}{4}$, we have the global bound. 
\end{proof}
\subsection{From Euclidean distances to manifold distances}
\label{sec:Euclid2Grassmann_distance}
The Grassmann manifold $(\Gr(n,p), \langle \cdot ,\cdot\rangle_0)$ of orthogonal projectors is an embedded submanifold of $(\sym(n), \langle \cdot ,\cdot\rangle_0)$. Hence, \Cref{lem:Eulcid2EmbeddedMan_distance}
applies and yields comparative bounds between the intrinsic Riemannian distance and the extrinsic Euclidean distance.
To make the bounds concrete, we need the maximum Euclidean curvature of a unit-speed Grassmann geodesic when considered as a space curve in $(\sym(n), \langle \cdot ,\cdot\rangle_0)$.
\begin{lemma}
\label{lem:geo_curvature}
    Let $t\to y(t)$ be a unit--speed geodesic on $\left(\Gr(n,p),\inp{\cdot}{\cdot}_0\right)$.
    Let $\kappa(t)=\|\ddot{y}(t)\|$ denote the Euclidean curvature of $y$, when considered as a space curve in the ambient space $(\sym(n), \langle \cdot ,\cdot\rangle_0)$.
    Then $\kappa(t)\leq 2$.
\end{lemma}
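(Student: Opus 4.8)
The plan is to use the isometric action $\Phi$ from \eqref{eq:projector_group_action} to reduce to the archetype point $P_0$, to differentiate the closed form of the geodesic in \eqref{eq:Grassmann_proj_geo}, to observe via conjugation--invariance of $\langle\cdot,\cdot\rangle_0$ that $\kappa$ is constant along the geodesic, and finally to bound the resulting quantity by an elementary optimization over the singular values of the initial velocity.

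Concretely, I would first note that $M\mapsto QMQ^T$ ($Q\in O(n)$) is an isometry of the ambient space $(\sym(n),\langle\cdot,\cdot\rangle_0)$, since $\langle QMQ^T,QNQ^T\rangle_0=\frac12\trace(M^TN)=\langle M,N\rangle_0$ by cyclicity of the trace. Hence we may assume $y(0)=P_0$, and by \eqref{eq:Grassmann_proj_geo} the geodesic is $y(t)=\exp_m(tA)\,P_0\,\exp_m(-tA)$ with $A=\left(\begin{smallmatrix}0&-\Delta^T\\ \Delta&0\end{smallmatrix}\right)\in\Skew(n)$, where the unit--speed condition is exactly $\normF{\Delta}=1$ (as recorded after \eqref{eq:Grassmann_proj_geo}). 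Since $A$ commutes with $\exp_m(tA)$, differentiating twice gives $\ddot y(t)=\exp_m(tA)\,[A,[A,P_0]]\,\exp_m(-tA)$, so by the conjugation--invariance above $\kappa(t)=\|\ddot y(t)\|_0=\|[A,[A,P_0]]\|_0$ is independent of $t$.

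It then remains to evaluate this constant. A $2\times 2$ block computation yields $[A,P_0]=\left(\begin{smallmatrix}0&\Delta^T\\ \Delta&0\end{smallmatrix}\right)$ (consistent with $\dot y(0)$ being a tangent vector as in \eqref{eq:GrTan}) and $[A,[A,P_0]]=\left(\begin{smallmatrix}-2\Delta^T\Delta&0\\ 0&2\Delta\Delta^T\end{smallmatrix}\right)$, which is symmetric, so
\[
  \kappa^2=\tfrac12\trace\!\left([A,[A,P_0]]^2\right)=2\left(\normF{\Delta^T\Delta}^2+\normF{\Delta\Delta^T}^2\right)=4\sum_{i=1}^{p}\sigma_i^4,
\]
where $\sigma_1,\dots,\sigma_p$ are the singular values of $\Delta$. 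The unit--speed constraint $\sum_i\sigma_i^2=\normF{\Delta}^2=1$ forces $\sigma_i\le 1$ for every $i$, hence $\sigma_i^4\le\sigma_i^2$ and $\sum_i\sigma_i^4\le\sum_i\sigma_i^2=1$; therefore $\kappa^2\le 4$, i.e. $\kappa(t)\le 2$.

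I do not anticipate a real obstacle: the argument is essentially bookkeeping with commutators. The two points that deserve attention are (i) justifying that $\Phi$ is an isometry of the \emph{ambient} $(\sym(n),\langle\cdot,\cdot\rangle_0)$, not merely of $\Gr(n,p)$, so the reduction to $P_0$ is legitimate, and (ii) the observation that $\kappa$ is constant along each geodesic, which is what makes evaluation at a single parameter value sufficient. One may additionally remark that the bound is sharp, attained when $\Delta$ has rank one with $\sigma_1=1$ (a single principal rotation).
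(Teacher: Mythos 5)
Your proof is correct and follows essentially the same route as the paper: reduce to the base point $P_0$, differentiate the closed-form geodesic $\exp_m(tA)P_0\exp_m(-tA)$ twice, and bound $4\sum_i\sigma_i^4$ by $4$ using the unit-speed constraint $\sum_i\sigma_i^2=1$. The only differences are cosmetic (you work directly with the projector form and commutators rather than via the Stiefel representative, and you add the observations that $\kappa$ is constant along each geodesic and that the bound is sharp for rank-one $\Delta$).
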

\begin{proof}
    Consider a Stiefel representative $\gamma(t)=Q_0\exp_m \qty(t\begin{pmatrix}
        0&-\Delta^T\\
        \Delta&0
    \end{pmatrix})\begin{pmatrix}
        I_p\\
        0
    \end{pmatrix}$
    of a unit--speed geodesic $y(t)=\gamma(t)\gamma(t)^T$ on $\Gr(n,p)$. Without loss of generality, $Q_0=I_n$. With $X:=\begin{pmatrix}
        0&-\Delta^T\\
        \Delta&0
    \end{pmatrix}$ it holds
    \[
        \dot y(t)=\dot \gamma(t)\gamma(t)^T+\gamma(t)\dot\gamma(t)^T
        =\exp_m\qty(tX)\qty(XP_0+P_0X)\exp_m\qty(-tX).
    \]
    Unit--speed implies 
    $
        1=\norm{\dot y(t)}_0^2=\norm{XP_0+P_0X}_0^2
        =\frac{1}{2}\qty(2\tr(\Delta^T\Delta))=\normF{\Delta}^2.
    $

    This yields that $
        \ddot y(t)=\exp_m\qty(tX)\begin{pmatrix}
            2\Delta^T\Delta&0\\
            0&2\Delta\Delta^T
        \end{pmatrix}\exp_m\qty(-tX)$
    has norm
    \begin{align*}
       \kappa(t)^2= \norm{\ddot y(t)}_0^2=\frac{1}{2}\tr(\begin{pmatrix}
            4\Delta^T\Delta\Delta^T\Delta&0\\
            0&4\Delta\Delta^T\Delta\Delta^T
        \end{pmatrix})
        =4\|\Delta^T\Delta\|_F^2
        \leq 4.
    \end{align*}
\end{proof}
Any two points $P,\tilde P\in \Gr(n,p)$ with $\dist(P,\tilde P)<\frac{\pi}{2}$ can be connected by a unique minimizing 
geodesic \cite[Proposition 5.2]{Bendokat:2024}.
\Cref{lem:Eulcid2EmbeddedMan_distance} combined with \Cref{lem:geo_curvature} gives
\begin{equation}
    \dist(P,\tilde P) 
    \geq \|P-\tilde P\|_0\geq \sin(\dist(P,\tilde P)).
\end{equation}

Applying the mean--value inequality \cite{Hall:1979},  we obtain for $B_1,B_2\in \R^{(n-p)\times p}$ 
\begin{equation*}
    \norm{\varphi(B_1)-\varphi(B_2)}_0\leq \max_{t\in[0,1]}\norm{d\varphi_{(B_2+t(B_1-B_2))}}_{\textnormal{F},0} \normF{B_1-B_2}=M\normF{B_1-B_2},
\end{equation*}
with $M=\max_{t\in[0,1]}\norm{d\varphi_{(B_2+t(B_1-B_2))}}_{\textnormal{F},0}$.

With this bound at hand, we can estimate the manifold interpolation error in terms of the interpolation error in local coordinates by combining \Cref{lem:Eulcid2EmbeddedMan_distance} and \Cref{lem:cond_varphi}
\begin{theorem}\label{thm:interpolation_error}
    Let $B,\tilde B\in\R^{(n-p)\times p}$ two points in the coordinate space, which we interprete as true and interpolated data, respectively.
    Let $P=\varphi(B), \tilde P = \varphi(\tilde B)$ be the corresponding manifold images.
    Then 
    \begin{equation*}
        \dist(P,\tilde P)\leq \arcsin(M\normF{B-\tilde B})=
        M\normF{B-\tilde B}+\mathcal{O}\qty(\normF{B-\tilde B}^3),
    \end{equation*}
    provided $\normF{B-\tilde B}<\frac{1}{M}$ and $\dist(P,\tilde P)<\frac{\pi}{2}$. The amplification factor $M$ 
    is addressed in \Cref{lem:cond_varphi}  and is at most $\sqrt{\frac52} +1$.
\end{theorem}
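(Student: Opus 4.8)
The plan is to sandwich the extrinsic distance $\|P-\tilde P\|_0$ between a function of the intrinsic distance $\dist(P,\tilde P)$ and a constant multiple of $\normF{B-\tilde B}$, and then to invert the resulting scalar inequality. For the lower sandwich bound I would use that the hypothesis $\dist(P,\tilde P)<\frac{\pi}{2}$ guarantees, by \cite[Proposition 5.2]{Bendokat:2024}, a minimizing unit--speed geodesic joining $P$ and $\tilde P$, so that \Cref{lem:Eulcid2EmbeddedMan_distance} applies with the curvature bound $\hat\kappa\le 2$ from \Cref{lem:geo_curvature}; since $\pi/\hat\kappa\ge\frac{\pi}{2}$, the same smallness assumption is admissible there, and one obtains
\[
    \|P-\tilde P\|_0 \ \ge\ \sin\bigl(\dist(P,\tilde P)\bigr),
\]
which is exactly the comparison inequality already assembled above by combining \Cref{lem:Eulcid2EmbeddedMan_distance} and \Cref{lem:geo_curvature}.

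For the upper sandwich bound I would write $P=\varphi(B)$, $\tilde P=\varphi(\tilde B)$ and apply the mean--value inequality along the segment $t\mapsto \tilde B+t(B-\tilde B)$ together with the operator norm estimate of \Cref{lem:cond_varphi}, which yields
\[
    \|P-\tilde P\|_0 \ \le\ M\,\normF{B-\tilde B}, \qquad M=\max_{t\in[0,1]}\bigl\|d\varphi_{\tilde B+t(B-\tilde B)}\bigr\|_{\textnormal{F},0} \le \sqrt{\tfrac{5}{2}}+1 .
\]
Chaining the two displays gives $\sin\bigl(\dist(P,\tilde P)\bigr)\le M\normF{B-\tilde B}$. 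Because $\dist(P,\tilde P)\in[0,\tfrac{\pi}{2})$, on which $\sin$ is strictly increasing with inverse $\arcsin$, and because the hypothesis $\normF{B-\tilde B}<\frac{1}{M}$ places $M\normF{B-\tilde B}$ strictly inside the domain $[0,1)$ of $\arcsin$, I would apply $\arcsin$ to both sides to conclude
\[
    \dist(P,\tilde P) \ \le\ \arcsin\bigl(M\normF{B-\tilde B}\bigr).
\]
The asymptotic form then drops out of the Taylor expansion $\arcsin(x)=x+\tfrac{1}{6}x^{3}+\mathcal{O}(x^{5})$ as $x\to0$.

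I do not anticipate a serious obstacle. The only point that needs genuine care is the bookkeeping of the hypotheses of \Cref{lem:Eulcid2EmbeddedMan_distance}: one must verify that the single smallness assumption $\dist(P,\tilde P)<\frac{\pi}{2}$ simultaneously secures the existence of the minimizing geodesic on $\Gr(n,p)$ and lies below the curvature threshold $\pi/\hat\kappa$ (which it does since $\hat\kappa\le 2$), and that $\normF{B-\tilde B}<\frac{1}{M}$ is precisely the condition making $\arcsin$ evaluable at an admissible argument. Everything else -- the mean--value inequality, the monotonicity of $\sin$ on $[0,\frac{\pi}{2})$, and the Taylor expansion of $\arcsin$ -- is routine, and the size of the amplification factor $M$ has already been pinned down in \Cref{lem:cond_varphi}.
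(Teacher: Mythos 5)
Your proposal is correct and follows essentially the same route as the paper: the lower bound $\|P-\tilde P\|_0\ge\sin(\dist(P,\tilde P))$ from \Cref{lem:Eulcid2EmbeddedMan_distance} together with the curvature bound of \Cref{lem:geo_curvature}, the upper bound $\|P-\tilde P\|_0\le M\normF{B-\tilde B}$ from the mean--value inequality and \Cref{lem:cond_varphi}, then inversion of $\sin$ and a Taylor expansion of $\arcsin$. Your explicit bookkeeping of the hypotheses (that $\hat\kappa\le 2$ makes $\pi/\hat\kappa\ge\pi/2$, and that $\normF{B-\tilde B}<1/M$ keeps the argument of $\arcsin$ admissible) is in fact slightly more careful than the paper's own two-line proof.
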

\begin{proof}
    The first inequality is a direct consequence of \Cref{lem:Eulcid2EmbeddedMan_distance} combined with the curvature result of \Cref{lem:geo_curvature}. A Taylor expansion of $\arcsin$ yields the second equality. 
\end{proof}
\begin{remark}
In particular, \Cref{thm:interpolation_error} shows that the order of the interpolation error is preserved when transiting from data on the coordinate domain to manifold data. 
Take for example the classical interpolation error estimates for Lagrange and Hermite, see e.g. \cite[Sections 8.1 and 8.5]{Alfio:2007}. We immediately obtain error bounds for Grassmann Lagrange-- and Hermite interpolation.
Consider the data $\qty{(t_1,f_1,f_1'),(t_2,f_2,f_2')}$ with $t_2-t_1=h\to 0$ and  $K_L,K_H>0$. Then
\begin{align*}
    \dist(f(t^*),\hat f_L(t^*))&\leq K_LMh^2+\mathcal{O}(h^6)=\mathcal{O}(h^2)\\
    \dist(f(t^*),\hat f_H(t^*))&\leq K_HMh^4+\mathcal{O}(h^{12})=\mathcal{O}(h^4),
\end{align*}
where $\hat f_L$ and $\hat f_H$ are  the Lagrange- and Hermite interpolant, respectively. 
\end{remark}
An illustration of the bound $\dist(P,\tilde P)\leq \arcsin(M\normF{B-\tilde B})$ is shown in \Cref{fig:theorem_8}, and we observe that it is not sharp. 
\begin{figure}[!ht]
    \centering
    \includegraphics[width=0.8\linewidth]{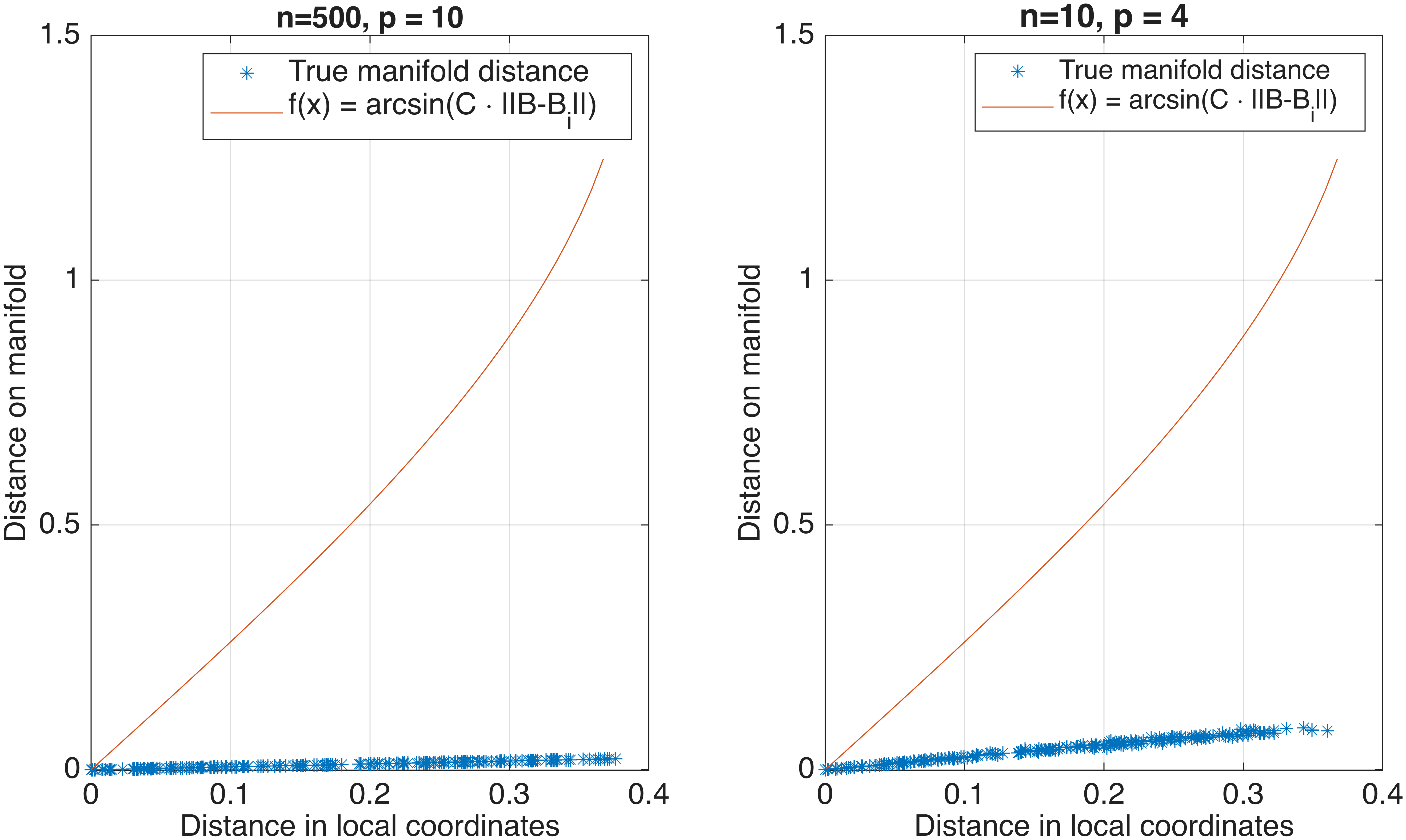}
    \caption{Illustration of the bound in \Cref{thm:interpolation_error} with $n=500, p=10$ and $n=10,p=4$. $C=\sqrt{\frac{5}{2}}+1$ comes from \eqref{eq:cond_varphi}. To produce the figure we mapped a point $P\in \Gr(n,p)$ to its MV coordinate $B$ and generated 200 random coordinates in a small neighborhood of $B$, $\qty{B_i}_{i=1}^{200}$. Subsequently we mapped these points to $\Gr(n,p)$ via $\varphi$ and computed the distance between $P$ and the Grassmann points $\qty{P_i}_{i=1}^{200}$ as $\dist(P,P_i)=\norm{\Log_P(P_i)}_0$.    }
    \label{fig:theorem_8}
\end{figure}

\section{Experimental results}
\label{sec:experiments} 
We will now present numerical evidence that interpolation in local coordinates can be competitive in terms of the interpolation accuracy when compared to interpolation in Riemannian normal coordinates. As indicated in the previous section we will focus on Lagrange-- and Hermite interpolation. 

In the experiments, we will interpolate data pairs exclusively, eg. two points and their derivative information. In this special case, Lagrange interpolation in local coordinates becomes linear interpolation and the normal coordinate variant becomes geodesic interpolation. 

\subsection{Interpolation of Q--factor projector of a matrix curve}

We consider the following academic example also considered in \cite{ZimmermannHermite:2020} and \cite{SeguinKressner:2024}: construct a matrix curve in $\R^{n\times p}$, with $n=1000,p=10$
\begin{equation*}\label{eq:exp_1_curve}
    Y(t)=Y_0+tY_1+t^2Y_2+t^3Y_3,
\end{equation*}
where $Y_0,Y_1,Y_2,Y_3\in \R^{n\times p}$ are randomly uniformly generated so that $Y_0$ has entries in $[0,1]$, $Y_1,Y_2$ have entries in $[0,0.5]$ and $Y_3$ has entries in $[0,0.25]$. We then consider the curve $U(t)=\textnormal{qf}(Y(t))$, where $\textnormal{qf}$ is the $Q$ factor of the QR--decomposition $U(t)R(t)=Y(t)$. The rank of $Y(t)$ is almost surely equal to $p=10$. We view the curve $t\mapsto U(t)$ as a Stiefel representation of a curve $U(t)U(t)^T$ on $\Gr(n,p)$. The derivative information $U'(t)$ is obtained by differentiating the QR--decomposition \cite[Proposition 2.2]{Walter:2012}. 

In \Cref{fig:exp_1} we present a numerical experiment, where we interpolate $P(t)=U(t)U(t)^T$ via the Stiefel representative $U(t)$ on $[0,1]$ with point and derivative data obtained at $t=0$ and $t=1$. The MV coordinates are the local coordinates multiplied with the permutation matrix $\mcP$ obtained from applying \Cref{alg:prepare_data} to the data point at $t=1$. The norms of the inverses of the upper $p\times p$ blocks before and after applying \Cref{alg:prepare_data} are shown in \Cref{tab:norms_exp1}.
As can be seen from the table, the data point at $t=0$ also experienced a reduction in condition number of its upper $p\times p$--block even though $\mcP$ was obtained from the maxvol procedure applied to $U(1)$.
\begin{table}[!h]
    \centering
    \begin{tabular}{|c|c|c|}\hline
         & Before \cref{alg:prepare_data}  & After \cref{alg:prepare_data} \\\hline
         $\normF{U_1^{-1}(0)}$ & $87.3$&$38.6$\\
         $\normF{U_1^{-1}(1)}$ &$142.1$ & $29.9$\\ \hline
    \end{tabular}
    \caption{The norm of the inverses of the upper $p\times p$ blocks of the data obtained by evaluating the curve $U(t)$ at $t=0$ and $t=1$.}
    \label{tab:norms_exp1}
\end{table}

When permuting the columns of the point data to prepare for computing the MV coordinates, one must apply the same permutation on the derivative data before mapping it to the local coordinate domain using \eqref{eq:d_psi}.  After mapping the point and derivative data to the local coordinate domain, we compute the Lagrange-- and Hermite interpolant at $t^*\in [0,1]$ to obtain $\bar P(t^*)\in \R^{(n-p)\times p}$. If $\bar P(t^*)$ is obtained from MV coordinate data, then the manifold interpolant is given as $\hat P(t^*)=\mcP^T\varphi(\bar P(t^*))$, where the parametrization $\varphi$ is computed by via a Cholesky decomposition (see \Cref{sec:numprocessing}). If the interpolant is obtained from standard local coordinates, then there is no application of $\mcP^T$. 

\begin{figure}[!ht]
    \centering
    \includegraphics[width=0.8\linewidth]{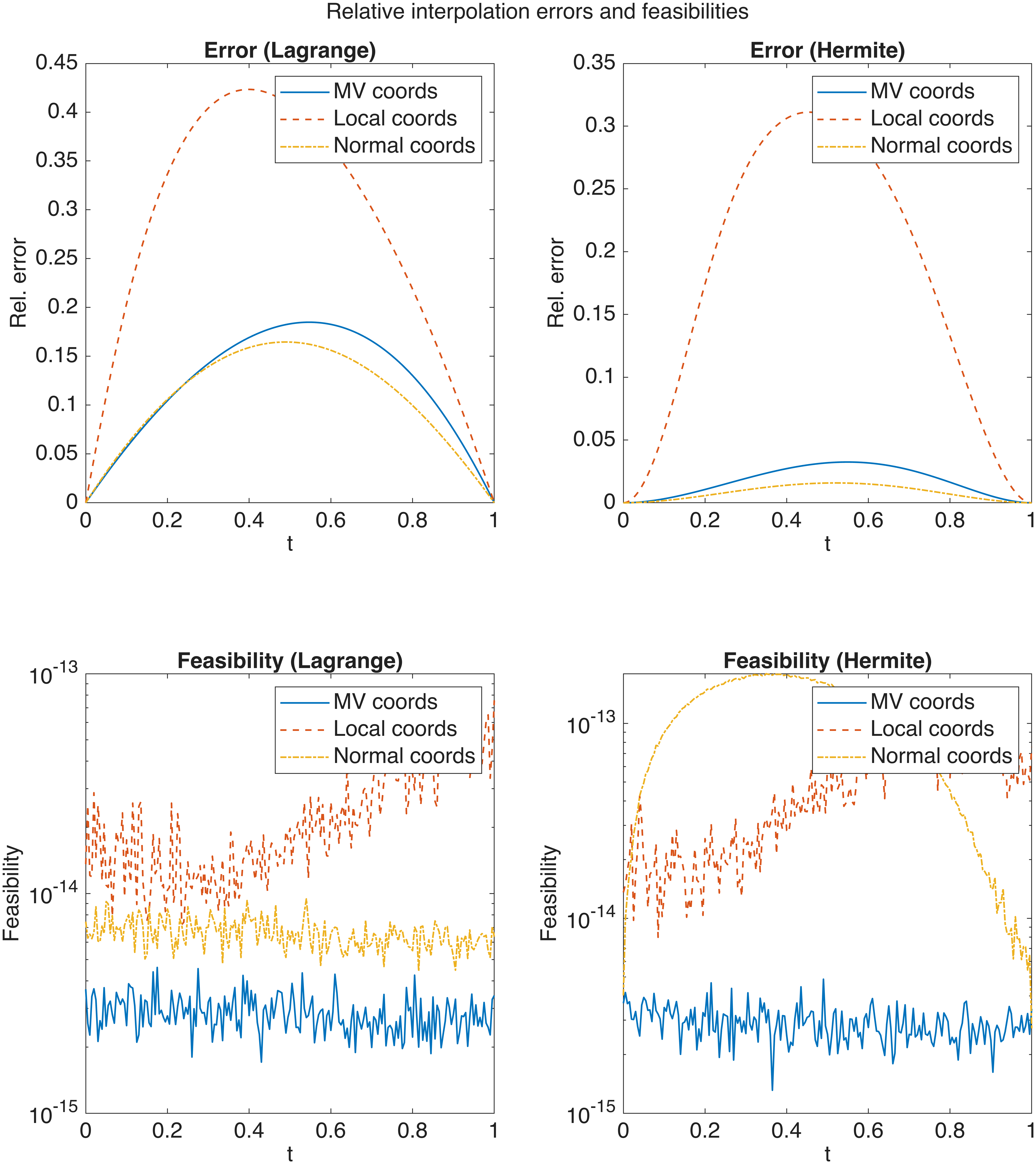}
    \caption{Relative errors and feasibilities for the six interpolation schemes when interpolating the projector $P(t)=U(t)U(t)^T$, with $U(t)$ being the Q--factor of the QR--decomposition of $Y(t)$ in \eqref{eq:exp_1_curve}. The relative error is computed as $\frac{\normF{P(t^*)-\hat P(t^*)}}{\normF{P(t^*)}}$ and feasibility refers to the quantity $\normF{\hat U(t^*)^T\hat U(t^*)-I_p}$, where $\hat U(t^*)$ is the obtained Stiefel representative of $\hat P(t^*)=\qty[\hat U(t^*)]$. 
    }
    \label{fig:exp_1}
\end{figure}

As expected, we observe that interpolation in Riemannian normal coordinates results in smaller interpolation errors compared to the approach using local coordinates. 
However, the difference to the errors associated with the MV-coordinates is not large, see \Cref{fig:exp_1}.
Moreover, the two lower plots in this figure show that the interpolation approach in MV-coordinates is best in preserving the numerical orthogonality of the associated Stiefel matrix representatives.
Despite the fact that the data did not feature ill-conditioned upper $p\times p$ blocks, we observe a significant reduction of the interpolation error when employing MV coordinates rather than the standard local coordinates.
This suggests that the matrix-decomposition free Grassmann coordinates should only be used in combination with the volume maximization.

\subsection{Parametric model reduction of a dynamical system}
\label{sec:FN-model}
In computational neuroscience, one is interested in modeling the electronic dynamics of neurons, which is applicable to, for example, brain science and prediction of experiments. The FitzHugh--Nagumo model (FN--model) \cite[Section 1.3.3]{Mallot:2024}, \cite{Lacasa:2024}, a two dimensional simplification of the four dimensional Hodgkin--Huxley model, models neuron activity. Here it is formulated with a diffusion term in one spatial variable. We follow the model set-up of \cite[Section 4.1]{Chaturantabut:2010}; however this refernce does not consider parametric interpolation of reduced-order models.
The governing equations are
\begin{subequations}\label{eq:FN}
    \begin{align}
        \frac{\partial u(t,x)}{\partial t}&=\varepsilon\frac{\partial^2 u(t,x)}{\partial x^2}+\frac{1}{\varepsilon}f(u(t,x))-\frac{1}{\varepsilon}v(t,x)+I_a,\label{eq:FNa}\\
        \frac{\partial v(t,x)}{\partial t}&=bu(t,x)-\gamma v(t,x) \label{eq:FNb}. 
    \end{align}
\end{subequations}
Here $f(u)=u(u+1)(1-u)$ is a nonlinear function, $b,\gamma,\varepsilon$ are constants, and $I_a$ represent externally applied voltage. The boundary conditions are 
\begin{equation*}
    \textnormal{Boundary conditons}=\begin{cases}
        u(0,x)=0,v(0,x)=0,\\
        u_x(t,0)=-50000t^3e^{-15t}=\beta(t),\\
        u_x(t,L)=0.
    \end{cases}
\end{equation*}

In this experiment we will interpolate low-rank subspaces associated with model reduction of the FN--system \eqref{eq:FN} by \textit{proper orthogonal decomposition (POD)}. The idea is as follows. Obtaining numerical realizations of dynamical systems can be computationally costly. To efficiently obtain solutions, snapshot--based model order reduction can be employed. A \textit{snapshot} is a state vector of the dynamical system computed with a numerical method at a specific time instance. A snapshot matrix is a collection of such snapshots. For the FN-system, the externally applied voltage $I_a$ is a system parameter. Assume that the interval $[\alpha,\beta]$ is the parameter domain that is of interest for $I_a$. We choose voltage sample locations $I_a\in [\alpha,\beta]$, and for each $I_a$, we compute a time trajectory for the FN-system and collect snapshots in $I_a$-dependent snapshot matrices $Y_{I_a}$. Subsequently, for each sampled $I_a$, we compute a POD basis from the SVD of $Y_{I_a}$, and store the dominant $p$ left--singular vectors $\Phi_{I_a}$, such that $\colspan(\Phi_{I_a})\approx \colspan(Y_{I_a})$. With this set of sample data at hand, we can approximate POD subspaces for each untried $I_a\in [\alpha,\beta]$ by Grassmann--interpolating the sampled POD bases, or equivalently by interpolating the POD subspaces.  

We note that plain--vanilla POD--based model order reduction is not sufficient for building an efficient surrogate model for \eqref{eq:FN}, as the nonlinear term affects the computational cost. To alleviate this, one can use the DEIM method presented in \cite{Chaturantabut:2010}. We chose not to include this here, since our focus is on the quality of the subspace interpolation. 

Using discretization as in \cite[Chapter 4.1]{Chaturantabut:2011}, we compute snapshot matrices at $6$ sample locations $I_a=0.03,0.04,\dots,0.08$, with the other system parameters kept fixed at $L=1, T=8, \epsilon=0.015, b=0.3$ and $\gamma=0.5$. Each snapshot matrix consists of $1001$ time samples of the FN--system, taken at $t\in [0,8]$. For each snapshot matrix, we compute the dominant $p$ left singular vectors of the $u$ resp. $v$ part of the state vector snapshots, so that we obtain low--rank POD bases $U$ and $V$ for $u$ resp. $v$ at each of the sampled parameters $I_a$. The dimensions are $n=1024$ and $p=8$, where the latter is chosen according to the singular value drop of the snapshot matrices and the former $n=1024$ comes from the resolution of the FD approximation of the second-order term. By approximating the derivatives $\dv{I_a}u(t)$ and $\dv{I_a}v(t)$ via finite differences, we can obtain the derivatives of the respective POD bases at each $I_a$ by differentiating the SVD used to compute the POD basis \cite[Section S3]{ZimmermannHermite:2020}. Doing so for each $I_a$, we obtain the sample data (all in rectangular Stiefel representations) 
\begin{equation*}
    \mcS_U = \qty{I_a,U_{I_a},U'_{I_a}\mid I_a = 0.03,\ldots, 0.08},  \ \ \mcS_V = \qty{I_a,V_{I_a},V'_{I_a}\mid I_a = 0.03,\ldots, 0.08}.
\end{equation*}
Restricting to $\mcS_U$, we construct an interpolant for each pair of consecutive data points.
The procedure for computing the interpolants is the same as in the previous experiment. 
In \Cref{tab:norms_exp2}, we present the results of the volume maximization of \Cref{alg:prepare_data}.
The table shows that the matrix blocks associated with the upper $p$ entries of the state vectors are not at all suitable for interpolation and volume maximization is mandatory.
We present the relative interpolation errors in \Cref{fig:exp_2} for $I_a\in[0.03,0.08]$. 
\begin{table}[h]
    \centering
    \begin{tabular}{|c|c c| c c|}\hline
        & \multicolumn{2}{c}{Before \Cref{alg:prepare_data}}& \multicolumn{2}{c|}{After \Cref{alg:prepare_data}} \\
        {\small Interval} & {\small Left} & {\small Right} & {\small Left} & {\small Right}\\ \hline
        $[0.03,0.04]$ &$1.7\cdot 10^{14}$ &$1.8\cdot 10^{13}$ &$30.5$ &$30.6$ \\
        $[0.04,0.05]$ &$1.8\cdot 10^{13}$ &$1.3\cdot 10^{13}$ &$30.8$ &$31.1$ \\
        $[0.05,0.06]$ &$1.3\cdot 10^{13}$ &$1.4\cdot 10^{13}$ &$30.8$ &$30.8$ \\
        $[0.06,0.07]$ &$1.4\cdot 10^{13}$ &$1.1\cdot 10^{13}$ &$30.6$ &$30.7$ \\
        $[0.07,0.08]$ &$1.1\cdot 10^{13}$ &$9.5\cdot 10^{12}$ &$30.6$ &$30.6$ \\\hline
    \end{tabular}
    \caption{Norms of the inverse upper $p\times p$ blocks of the POD basis matrices associated with the FN--system \eqref{eq:FN} before and after applying the volume-maximizer \Cref{alg:prepare_data}. `Left' and `right' refer to the data matrices at the lower and upper bound of the interval in question. For example when interpolating on $[0.03,0.04]$, `left' is the point $U(0.03)$ and `right'  is $U(0.04)$. In the case at hand, significant reductions by 12 orders of magnitude are observed.}
    \label{tab:norms_exp2}
\end{table}

In line with the previous experiment, we observe that interpolation in MV coordinates and Riemannian normal coordinates leads to similar errors. There are local regions where the MV-interpolation even outperforms interpolation in normal coordinates.\\
We also want to juxtapose the interpolation errors in local coordinates before and after the volume maximization procedure. Yet, with the data as they are, the problem is so ill-conditioned that the interpolation is completely numerically corrupted without any volume maximization.
Hence, to be able to visualize the effects, we ran the maxvol algorithm and terminated after only 3 iterations.
This was sufficient for all routines to become operational. 
In \Cref{fig:exp_2_2} we display the associated interpolation errors. 
As can be seen from this figure, without proper volume maximization, the interpolation errors are generally large and occasionally feature strong spikes.
 \begin{figure}[!ht]
    \centering
    \includegraphics[width=0.8\linewidth]{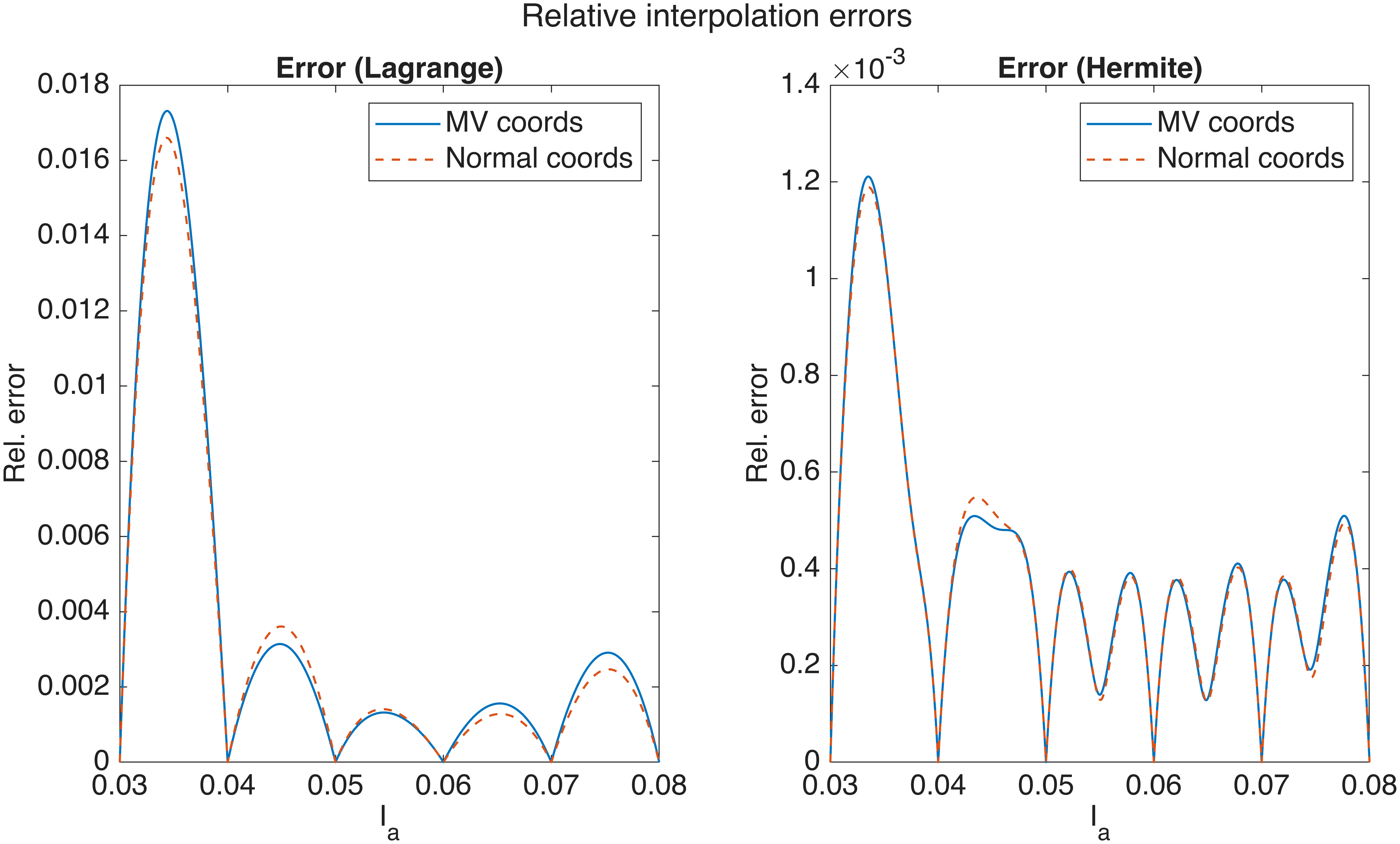}
    \caption{Relative errors for the four interpolation schemes when interpolating the POD bases of the FN model for varying $I_a$.}
    \label{fig:exp_2}
\end{figure}
\begin{figure}
    \centering
    \includegraphics[width=0.8\linewidth]{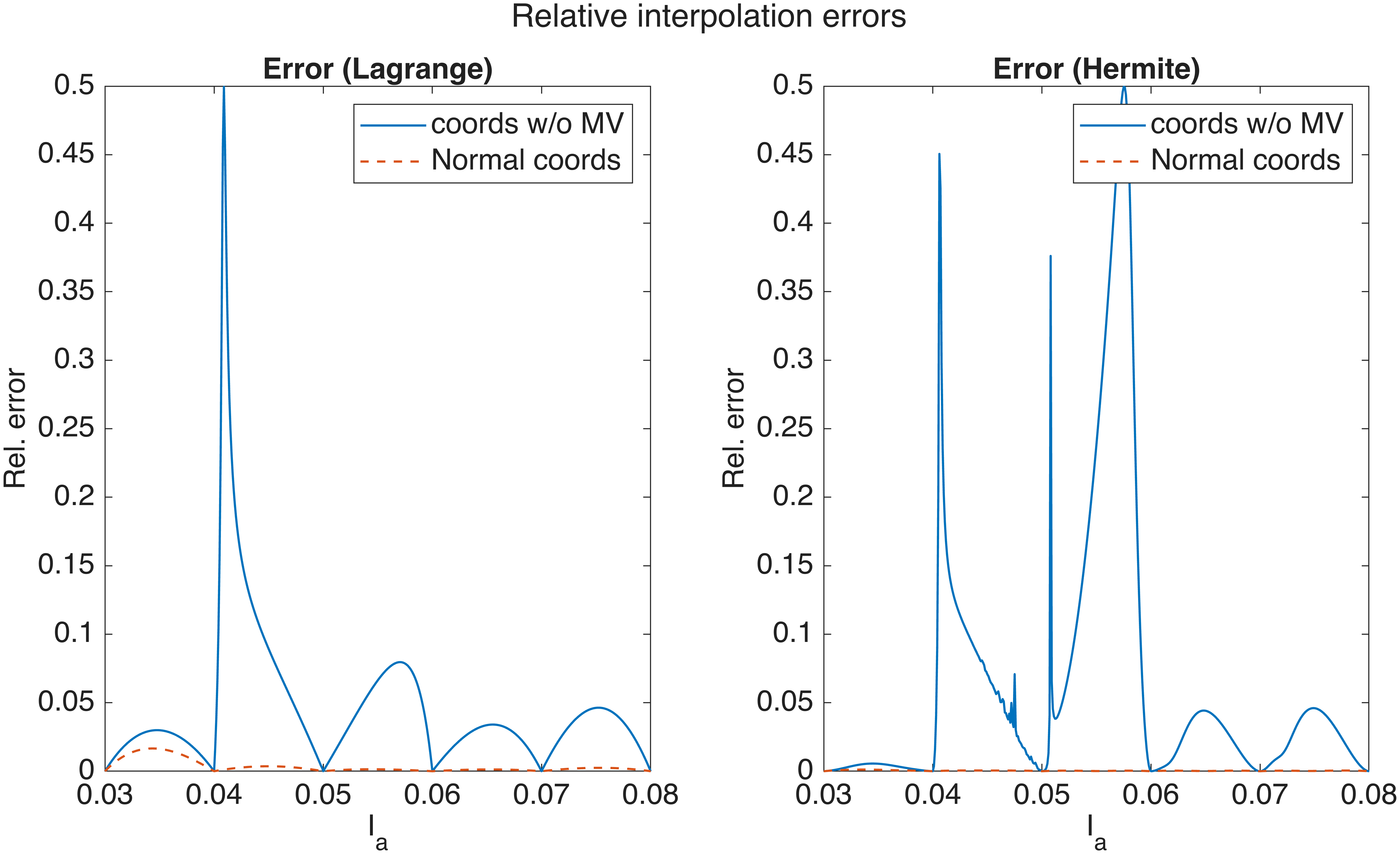}
    \caption{Interpolating the POD subspaces of the FN model: Riemannian normal coordinates vs. local coordinates without proper volume maximization. The experiment shows that volume maximization should not be omitted.}
    \label{fig:exp_2_2}
\end{figure}

\section{Summary and outlook}
\label{sec:conclusions}

We have presented a method for interpolation in local coordinates on the Grassmann manifold, which, as opposed to working in Riemannian normal coordinates and many other common retractions, does not require any matrix decompositions. For the Grassmann manifold, we have devised a strategy for obtaining maximum--volume coordinates. Employing MV coordinates has proven to be highly beneficial for the interpolation accuracy or even strictly necessary, in order to avoid algorithmic breakdowns.

We have conducted a thorough error analysis and quanitfied how errors associated with (Euclidean) interpolation in the local coordinate domain are forward to interpolation errors on the Grassmann manifold. In particular, the error analysis shows that the asymptotic interpolation error is preserved. This means that if the interpolation method in coordinate domain has an asymptotic error $\mathcal{O}(h^k)$, for step-size $h\to 0$, then same holds for the interpolation error on the manifold. 

The focus of this paper is on a comparison of the performance of Riemannian interpolation on $\Gr(n,p)$ under different coordinates, not on the interpolation as such.
For simplicity, Lagrange-- and Hermite interpolation between only two points was considered. However, it is straightforward to construct higher--order interpolants using data from more than two points, provided that all data can be mapped to the same coordinate domain. 
For applications in parametric model-order reduction, it seems reasonable to expect that the dominant information is contained in similar locations across a data set of POD basis matrices, so that maximum--volume coordinates that provide a well-conditioned sub-block for a selected sample matrix, are unlikely to lead to ill-conditioned sub-blocks at the other sample matrices in a given data set.
How to truly generalize the maximum--volume sub-block search to a data set of multiple matrices is an open question.

In the numerical experiments, we have observed that the error that is introduced at the preprocessing stage due to the conditioning of the coordinate charts has a considerable impact on the final interpolation accuracy. 
When utilizing the optimized  maximum-volume coordinates, the interpolation errors in the numerical experiments were on par with those associated with the Riemannian normal coordinates.

A task for future research is to quantify the conditioning (and associated preprocessing errors) for other Grassmann coordinate charts such as the inverses of common Grassmann retractions.

\begin{center}
   \begin{tikzpicture}[scale = 2.1]
   \duck[body=yellow!50!brown!40!white,
       crazyhair=gray!50!white,
       eyebrow,
       glasses=brown!70!black,
       sunglasses=white,
       book=\scalebox{0.6}{
\parbox{2cm}{\color{black}
\centering Riem.\\ Geo.}},
       bookcolour=red!20!brown,
       laughing,
       tshirt=white,
       jacket=black,
        vampire=white,
        signpost={$\Gr(n,p)$}
        ]
\begin{scope}
   \node[inner sep=0pt] at (0.75,0.6)
   {\includegraphics[width=.08\textwidth,angle=-28,origin=c]{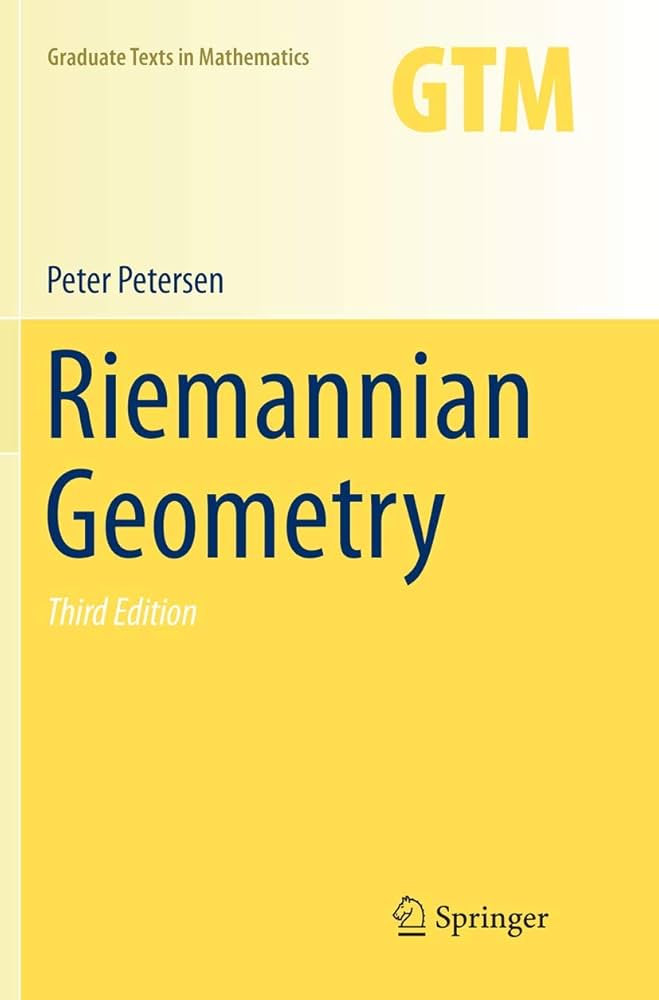}};
\end{scope}
\end{tikzpicture}
\end{center}

\backmatter

\section*{Declarations}


\begin{itemize}
\item This work was supported by the Independent Research Foundation Denmark, DFF, grant nr. 3103-00094B.
\item The authors declare no conflicts of interest relevant for the present work. 
\item The source code for the numerical experiments is available at \url{https://github.com/RalfZimmermannSDU/GrassmannInterpolation.git}.
\end{itemize}

\noindent

\begin{appendices}

\section{Riemannian normal coordinates}\label{app:rie_norm_coords}
The Riemannian exponential map with domain on $\T_P\Gr(n,p)$ sends a tangent vector $X$ to the endpoint of the geodesic $c_{P,X}:[0,1] \to \Gr(n,p)$ that starts from $P$ with velocity $X$. Hence, the exponential provides manifold images for tangent vectors
\[
    \Exp_P: \T_P\Gr(n,p) \to \Gr(n,p), \quad X \mapsto \tilde P =c_{P,X}(1).
\]
In general, the Riemannian exponential is not globally defined on the asssociated tangent spaces, but on the Grassmann and Stiefel manifolds it is. These manifolds are goedesicallly complete, thus complete, see \cite[Hopf-Rinow's Theorem, Chap. 7, Thm 2.8]{DoCarmo2013riemannian}.
The Riemannian exponential is locally a diffeomorphism. The inverse map is called the Riemannian logarithm
\[
    \Log_P: \Gr(n,p) \supset \mathcal{D} \to \T_P\Gr(n,p), \quad \tilde P \mapsto X = (\Exp_P)^{-1}(\tilde P).
\]
Matrix formulae for both maps are in \cite{EdelmanAriasSmith:1999, AbsilMahonySepulchre2004, Bendokat:2024}. The collection of Riemannian logarithm maps provides an atlas for the Grassmann manifold, i.e., a collection of coordinate charts. The Riemannian exponential maps are the associated parameterizations. Together, they form the {\em Riemannian normal coordinates}.
The Riemannian normal coordinates are radially isometric in the sense that the 
Riemannian distance between $P$ and $\tilde P = \Exp_P(X)$ is exactly 
the same as the length of the tangent vector $\|X\|_0$, \cite[Lem. 5.10 \& Cor. 6.11]{Lee1997riemannian}.
For interpolation purposes, {\em isometric} coordinates would be ideal, because then the coordinate maps would be free of geometric distortions. However, there can be no full isometry between a flat tangent space and a curved manifold. Therefore, the Riemannian normal coordinates, which are at least radially isometric, can be considered the gold standard for data interpolation on manifolds.

\section{Interpolation in Riemannian normal coordinates}
\label{sec:Int_in_normal_coords}

In the following, when considering Lagrange interpolation of a dataset $\qty{t_i,f_i}_{i=1}^d$, we build the $d-1$ Lagrange polynomial, and compute the interpolant $\hat f(t^*)$ for $t^*\in [t_1,t_d]$. For Hermite interpolation, given $t^*\in [t_i,t_{i+1}]$, compute the Hermite interpolant based on the data $(t_i,f_i,f_i'), (t_{i+1},f_{i+1},f_{i+1}')$. 

\subsection{Lagrange interpolation in normal coordinates}

Recall that the Lagrange polynomial of order $d-1$ passing through data in a vector space $\qty{(t_i,g_i)}_{i=1}^d\subset \R\times \R^n$ is given by 
\begin{equation}\label{eq:lagrange_poly}
    L(t)=\sum_{i=1}^d g_i\prod_{\stackrel{j=1}{j\neq i}}^d \frac{t-t_i}{t_i-t_j}.
\end{equation}
Given a dataset on the Grassmann manifold and some $t^*\in [t_1,t_d]$  $\qty{t_i,f_i}_{i=1}^d$, fix $f_i$ and map the data to the tangent space $\T_{f_i}\Gr(n,p)$ using the Riemannian logarithm $\Delta_j=\Log_{f_i}(f_j)$ for each $j$. Letting $\Delta_j$ play the role of $g_j$ in \eqref{eq:lagrange_poly}, one can compute the Lagrange interpolant in the tangent space to obtain $L(t^*)$. The Lagrange interpolant on $\Gr(n,p)$ is obtained as $\hat f(t^*)=\Exp_{f_i}(L(t^*))$.

\subsection{Hermite interpolation in normal coordinates} 

Given a dataset containing point and derivative information $(t_0,f_0,f_0'),(t_1,f_1,f_1')$, we conduct Hermite interpolation in Riemannian normal coordinates, as it was formulated in general and for the Stiefel manifold in \cite{ZimmermannHermite:2020}. The task is to construct a curve in the tangent space with base at either $f_0$ or $f_1 $ which, when composed with the Riemannian exponential map, is the Hermite interpolant. We select the second data point $f_1$ as the anchor. It follows from \cite[Theorem 1]{ZimmermannHermite:2020} that the Hermite interpolation curve in the tangent space $\T_{f_1}\Gr(n,p)$ is given by 
\begin{equation*}
    \mu(t)=L_{00}(t)\xi_{f_0}+L_{10}(t)\cdot 0+L_{01}(t)v_{f_0}+L_{11}(t)v_{f_1},
\end{equation*}
where the Hermite coefficient functions are \cite[Section 8.5]{Alfio:2007}
\begin{align*}
    L_{00}(t)&=\qty(1-\frac{2}{t_0-t_1}(t-t_0))\qty(\frac{t-t_1}{t_0-t_1}^2),\\
    L_{10}(t)&=\qty(1-\frac{2}{t_1-t_0}(t-t_1))\qty(\frac{t-t_0}{t_1-t_0})^2,\\
    L_{01}(t)&=(t-t_0)\qty(\frac{t-t_1}{t_0-t_1})^2,\\
    L_{11}(t)&=(t-t_1)\qty(\frac{t-t_0}{t_0-t_1})^2.
\end{align*}
Note that $\mu(t_1)=0$ ensures that $\Exp_{f_1}(\mu(t_1))=f_1$, 
$\xi_{f_0}=\Log_{f_1}(f_0)$, $v_{f_{1}}=f_1'$ and $v_{f_0}$ is given by 
\begin{align*}
    v_{f_0}&=\left.\deriv[s]\right|_{s=0}(\Log_{f_1}\circ \Exp_{f_0})(sf_0')\\
    &=\frac{(\Log_{f_1}\circ \Exp_{f_0})(hf_0')-(\Log_{f_1}\circ \Exp_{f_0})(-hf_0')}{2h}+\mathcal{O}(h^2).
\end{align*}
Finally, the Hermite interpolant is given by $\hat f(t^*)=\Exp_{f_1}(\mu(t^*))$.

\section{Grassmann subspace distance}
\label{app:subspace_dist}
Every two subspaces $\mathcal{U},\tilde{\mathcal{U}}\in \Gr(n,p)$ can be connected by a shortest (but not necessarily unique) geodesic, \cite{Wong1967}, \cite[Section 5]{Bendokat:2024}. The Riemannian distance between two subspaces is the length of such a shortest connecting geodesic and 
can be computed via the principal angles between $\mathcal{U}$ and $\tilde{\mathcal{U}}$.

If $U,\tilde U\in\St(n,p)$ are column-orthogonal matrices with 
$\colspan(U)=\mathcal{U}, \colspan(\tilde U) = \tilde{\mathcal{U}}$, then the principal angles are
$\theta_k := \arccos(\sigma_k) \in [0,\frac{\pi}{2}]$, 
where $\sigma_k$ is the $k$-th singular value of $U^T\tilde{U}\in\R^{p\times p}$.
Let $\Theta = (\theta_1,\ldots,\theta_p)^T\in[0,\frac{\pi}{2}]^p$ be the vector of principal angles.
Then, the Riemannian subspace distance is
 \begin{equation}
 \label{eq:subspace_dist}
  \dist(\mathcal{U},\tilde{\mathcal{U}}) = \|\Theta\|_2= \sqrt{\sum_k \theta_k^2}\leq \sqrt{p}\frac{\pi}{2}.
 \end{equation}

\end{appendices}


\bibliography{sn-bibliography}

\end{document}